\newtheorem{theorem}{Theorem}
\newtheorem{proposition}[theorem]{Proposition}%
\newtheorem{example}{Example}%
\newtheorem{definition}{Definition}%
\newcommand{\bnodes}{$\bullet$-nodes }
\newcommand{\cnodes}{$\times$-nodes }
\newcommand{\SL}{{\em SL}}
\newcommand{\UB}{{\em UB}}
\newcommand{\Dx}{\Delta x}				
\newcommand{\Dt}{\Delta t}
\title{Coupled Scheme for Linear and Hamilton-Jacobi Equations: Theoretical and Numerical Aspects}
\author*[1]{\fnm{Smita} \sur{Sahu}}\email{smita.sahu@port.ac.uk}
\affil*[1]{\orgdiv{School of Mathematics and Physics}, \orgname{University of Portsmouth}, \orgaddress{\street{Lion Terrace}, \city{Portsmouth}, \postcode{PO1 3HF}, \state{Hampshire}, \country{UK}}}
\begin{document}
\def\vet#1{{{\boldsymbol{#1}}}}
\maketitle
\begin{abstract}

We present a comprehensive analysis of the coupled scheme introduced in [Springer Proceedings in Mathematics \& Statistics, vol 237. Springer, Cham 2018 \cite{S2018}] for linear and Hamilton-Jacobi equations. This method merges two distinct schemes, each tailored to handle specific solution characteristics. It offers a versatile framework for coupling various schemes, enabling the integration of accurate methods for smooth solutions and the treatment of discontinuities and gradient jumps. In \cite{S2018}, the emphasis was on coupling an anti-dissipative scheme designed for discontinuous solutions with a semi-Lagrangian scheme developed for smooth solutions. In this paper, we rigorously establish the essential properties of the resulting coupled scheme, especially in the linear case. To illustrate the effectiveness of this coupled approach, we present a series of one-dimensional examples.
\end{abstract}
\keywords{Hamilton-Jacobi-Bellman equations, semi-Lagrangian schemes, anti-dissipative schemes, viscosity solutions, coupled schemes.}
\section{Introduction}\label{sec:intro}
In this paper we aim to prove some of the properties of coupled scheme proposed in \cite{S2018} for first order time dependent Hamilton-Jacobi (HJ) equations. We consider the following one-dimensional Cauchy problem
\begin{eqnarray}\label{eq:hj0}
\left\{
\begin{array}{ll}
\partial_t u+H(x,D u)=0, & (x,t)\in \mathbb{R}\times [0,T],\\
u(x,0)=u_0(x),& x\in \mathbb{R},
\end{array}
\right.
\end{eqnarray}
\noindent
where the Hamiltonian $H$ is convex in the gradient.
A classical motivation comes from optimal control theory where $H(x,\nabla u)=\max\limits_{\alpha\in A}\{f(x,\alpha)u_{x}(t,x)\}$ and $\alpha$ represents the control. It is well known that in this framework the solution $u$ of~\eqref{eq:hj0} corresponds to the value function of the corresponding control problem~\cite{BCD97, B94}. Typically, solutions are Lipschitz continuous when the data are Lipschitz continuous. However, in various applications such as control problems with state constraints, games, and image processing, discontinuous solutions are encountered. 

In \cite{S2018}, a technique  was developed that combines the semi-Lagrangian (\SL)~\cite{FF14} and ultra-bee (\UB) \cite {BZ07} scheme to solve both advection problems and HJ equations. Typically, the two initial schemes exhibit distinct characteristics, with one excelling in smooth regions of the solution and the other being more adept at handling discontinuities. The concept behind the coupling scheme is to create a new method that combines the strengths of both schemes without introducing excessive computational costs.
This coupling approach is inspired by hybrid schemes used in hyperbolic conservation laws. The choice between methods in the coupled scheme depends on a regularity indicator, incurring a small additional computational cost. It's worth noting that similar couplings are possible, particularly if the schemes use the same nodes. In one dimension, HJ equations are linked to hyperbolic conservation laws, with the viscosity solution of the HJ equation being the primitive of the entropy solution of the corresponding hyperbolic conservation law. Various numerical schemes have been developed for hyperbolic conservation laws (see e.g.~\cite{H83,H84,got-shu-98}, and many of these ideas extend to HJ equations. 
We also mention that more recently a new class of high-order filtered schemes has been proposed~\cite{BFS15} and improved \cite{F2020}, these schemes converge to the viscosity solution and a precise error estimate has been proved. It can be interesting to deal with discontinuous viscosity solutions so these schemes have to be adapted in order to obtain reasonable approximations which do not diffuse too much around the discontinuities of $Du$ and/or $u$ and do not introduce spurious oscillations. 

\noindent
In this paper, we recall the coupled scheme for~\eqref{eq:hj0} from \cite{S2018} based on the coupling between the \UB~ scheme (a particular anti-dissipative scheme) and a first order \SL~scheme. Idea is to take the advantage of the properties of the two methods introducing an indicator parameter $\sigma^n_j$ which will be computed in every cell $C_j$ at every time step in order to detect if there is a singularity or a jump discontinuity there. Then, according to the value of the indicator $\sigma^n_j$, we will use the \SL~scheme if the solution is regular enough switching to the \UB~scheme when a discontinuity is detected.  It has been discuss in \cite{S2018} that one of the difficulties in this coupling is that they use different grids and different values: the \SL~scheme computes approximate values at the nodes whereas the \UB~scheme typically works on averaged values which are cell centred, so we have to introduce some projection operators on the grids to switch from one scheme to the other. For schemes working on the same grid and with the same kind of approximate values this projection is not necessary.

\emph{Organisation of the paper.} In \S\ref{sec:background}, we will recall the \SL~\cite{FF14} and \UB~\cite{BZ07} schemes and the coupled scheme from \cite{S2018}. In \S\ref{sec:prop}, we will prove some important properties of the coupled \SL~+\UB~scheme when applied to the linear advection equation. Finally \S\ref{sec:cs_numtests}, will be devoted to the analysis of some numerical tests in one-dimension for the linear and the nonlinear equation.


\section{Background results for the uncoupled schemes.}\label{sec:background}
To make it easier for the reader, we have recall a concise summary of the \SL schemes by Falcone and Ferretti from~\cite{FF94}, the \UB scheme from~\cite{BZ07}, and the coupling scheme introduced in~\cite{S2018}. Our notation aligns with that used in~\cite{S2018}.

\subsection{Semi-Lagrangian Schemes (SL)~\cite{FF14}} 
In the HJ framework \SL~scheme have been developed initially for the solution of Bellman equations associated with optimal control problems and they can also be interpreted as a discretisation of the dynamic programming principle

In the particular case where the Hamiltonian just depends on the gradient of the solution, i.e. $H(x,Du)=H(Du)$ we have a representation formula for the solution of the Cauchy problem
\begin{equation}
\label{eq:hj1}
\left\{
\begin{array}{ll}
\partial_t u+H(D u)=0, &  (x, t)\in \mathbb{R}\times [0,T]\\
u(x,0)=u_0(x),&x\in \mathbb{R}.
\end{array}
\right.
\end{equation}
provided $H$ satisfies (A2) (the so called {\em Hopf-Lax representation formula}). The \SL~approximation has a strong link with the representation formula, in fact the time discretization can be written as 
$$u(x,t+\Delta t)= \min_{a\in \mathbb{R}}\{u(x-a\Delta t,t)+ \Delta t H^{*}(a)\}$$
where 
$$H^{*}(a)=\sup_{p\in\mathbb{R}} \{a\cdot p-H(p)\}$$
is the {\em Legendre transform} of Hamiltonian $H$. To get the fully discrete version of the scheme one has to introduce a space discretisation. Let us denote by $I_1[v]$  the $P_1$-interpolation (linear interpolation) of a function $v$ in dimension one on the grid $G=\{x_j\}$, i.e. define
\begin{equation}\label{interpolation}
I_1[v](x) = \frac{x_{j+1}-x}{\Dx} v_j+\frac{x-x_{j}}{\Dx} v_{j+1}\; \hbox{ for }  x\in[x_j,x_{j+1}] 
\end{equation}
The \SL~scheme with $P_1$ interpolation corresponding to \eqref{eq:hj1} is
\begin{equation}\label{eq:SL}
u_{j}^{n+1}= \min_{a\in \mathbb{R}}\{ I[u^{n}](x_{j}-a\Delta t)+ \Delta t H^{*}(a)\}
\end{equation}
This scheme is monotone, $L^\infty$-stable and works for the large Courant number.  Moreover,  convergence and error estimates have been proved (the interested reader can find in  \cite{FF14} a detailed presentation of the theory).  
\subsection{Ultra-bee (\UB) scheme for HJ equations}
In this section, we recall the \UB~scheme for the HJ equation from \cite{BZ07}. The \UB~scheme is non-monotone and for the transport equation with constant velocity has an interesting property: it is exact for the class of step functions. We consider the following Cauchy problem
\begin{eqnarray}\label{eq:hj2}
\left\{
\begin{array}{ll}
\partial_t u+ \max\limits_{\alpha \in \mathcal{A}} \{f(x,\alpha)u_x\}=0, \quad  (t,x)\in[0,T]\times \mathbb{R}\\
u(x,0)=u_0(x),\quad x\in \mathbb{R},
\end{array}
\right.
\end{eqnarray}
where $f_m:=\min\limits_{\alpha\in\mathcal{A}}  \{f(x,\alpha)\}$  and  $f_M:=\max\limits_{\alpha\in\mathcal{A}} \{f(x,\alpha)\}$ equation \eqref{eq:hj2} can be written as
\begin{equation}\label{eq:hj3}
\left\{
\begin{array}{ll}
\partial_t u+ \max\{f_m(x)u_x, f_M(x)u_x\}=0, \quad  (t,x)\in[0,T]\times \mathbb{R}\\
u(x,0)=u_0(x),\quad x\in \mathbb{R}.
\end{array}
\right.
\end{equation}
Let $\Delta t$ be a constant time step  and $t_n=n\Delta t$ for $n\geq 0$. Given two velocity functions $f_g:\mathbb{R}\rightarrow \mathbb{R}$, where $g=m,M$,  we introduce the following notation for the corresponding CFL numbers at a node $x_j$, $j\in \mathbb{Z}$:
\begin{equation}
\nu^m_j:= \frac{\Delta t}{\Delta x}f_m(x_j)~\text{and}~ \nu^M_j:= \frac{\Delta t}{\Delta x}f_M(x_j), 
\end{equation}
Then we can define the infinite vectors, $\nu^m=\{\nu^m_j\} _{j\in \mathbb{Z}}$, $\nu^M:=\{\nu^M_j\}_{ j\in \mathbb{Z}}$.  Now let us define the exact cell average values of the approximate solution at time $t_n$ as
\begin{equation}\label{average}
\overline{u}_{j}^{n}=\frac{1}{\Delta x}\int_{x_{j-1/2}}^{x_{j+1/2}} u(x, t_{n}) dx~,j\in \mathbb{Z},~ n\in \mathbb{N}.
\end{equation}
The \UB~scheme will work on this average values (whereas the \SL~scheme works on point-wise values) typically located at the cell center..
Let $\Vert f\Vert_\infty$ denote the $L^\infty$-norm of a bounded function defined on $\mathbb{R}$ 
the CFL condition is
\begin{equation}\label{CFL_fix}
\max\left (\Vert f_m\Vert_\infty,\Vert f_M\Vert_\infty\right)\frac{\Delta t}{\Delta x} \leq 1.
\end{equation}
We recall the algorithm for UB scheme  \ref{UBalgo} from \cite{BZ07,S2018}.
{\small
\begin{algorithm}
\caption{Algorithm for UB scheme}\label{UBalgo}	
\noindent\makebox[\linewidth]{\rule{\textwidth}{0.4pt}}
{\bf{Initialisation:}} Compute the initial averages ${\{\overline{u}_j^0}\}_{j\in \mathbb{Z}}$  as in equation \eqref{average} for $n=0$\\ 
{\bf Main cycle:}  For $n \geq 0$,  compute $\overline u^{n+1}=\{\overline u_j^{n+1}\}_{j\in \mathbb{Z}}$ in the following way:
\begin{algorithmic}
\State	{\bf Step 1.}  For every $j\in \mathbb{Z}$, we define the ``fluxes" $u^n_{j\pm 1/2}(\nu_j)$ for $\nu_j\in\{ \nu^m_j,\nu^M_j\}$ as follows: for $\nu_j \geq 0$, we define
\begin{equation}\label{linEquGrad1}
u^{n,L}_{j+1/2}(\nu_j):=
\left\{
\begin{array}{ll} 
\min\left(\max\left(\overline{u}^n_{j+1}, b^+_j(\nu_j)\right), B^+_j\right)& \text{if}~ \nu_j >0  \\
\overline{u}_{j+1}^n &\text{if}~ \nu_j=0~\text{and}\quad  \overline{u}_j^n\neq \overline{u}^n_{j-1}\\
\overline{u}_{j}^n &\text{if}~ \nu_j=0~\text{and}\quad  \overline{u}_j^n= \overline{u}^n_{j-1}
\end{array}
\right.
\end{equation}
where
\begin{equation}\label{linEquGrad2}
\left\{
\begin{array}{ll} 
b_j^+(\nu_j):= \max\left(\overline{u}_j^n, \overline{u}_{j-1}^n \right)+ \frac{1}{\nu_j}\left( \overline{u}_j^n-\max\left( \overline{u}_j^n, \overline{u}_{j-1}^n\right) \right),\\
B_j^+(\nu_j):= \min\left(\overline{u}_j^n, \overline{u}_{j-1}^n\right)+ \frac{1}{\nu_j}\left( \overline{u}_j^n-\min\left( \overline{u}_j^n, \overline{u}_{j-1}^n\right)\right),\\
\end{array}
\right.
\end{equation}
for $\nu_j <0$, we define
\begin{equation}\label{linEquGrad3}
u^{n,R}_{j-1/2}(\nu_j):=
\left\{
\begin{array}{ll} 
\min\left(\max\left( \overline{u}^n_{j-1}, b^-_j(\nu_j)\right), B^-_j\right)& \text{if}~ \nu_j <0  \\
\overline{u}_{j-1}^n &\text{if}~ \nu_j=0~\text{and}\quad  \overline{u}_j^n\neq \overline{u}^n_{j+1}\\
\overline{u}_{j}^n &\text{if}~ \nu_j=0~\text{and}\quad  \overline{u}_j^n= \overline{u}^n_{j+1}
\end{array}
\right.
\end{equation}
where
\begin{equation}\label{linEquGrad4}
\left\{
\begin{array}{ll} 
b_j^-(\nu_j):= \max\left(\overline{u}_j^n, \overline{u}_{j+1}^n \right)+ \frac{1}{\nu_j}\left( \overline{u}_j^n-\max\left( \overline{u}_j^n, \overline{u}_{j+1}^n\right) \right),\\
B_j^-(\nu_j):= \min\left(\overline{u}_j^n, \overline{u}_{j+1}^n\right)+ \frac{1}{\nu_j}\left( \overline{u}_j^n-\min\left( \overline{u}_j^n, \overline{u}_{j+1}^n\right)\right),\\
\end{array}
\right.
\end{equation}
\State   {\bf Step 2.} For $\nu_j \in\left\{ \nu^m_j, \nu^M_j\right\}$, we define the flux form of the \UB~scheme
\begin{equation}\label{eq:UB1}
\overline{u}^{n+1}_j=\overline{u}^n_j- \nu_j\left({u}^{n,L}_{j+1/2}(\nu)-{u}^{n,R}_{j-1/2}(\nu)\right)
\end{equation}
\State {\bf Step 3.}  Finally, we set $\overline{u}_j^{n+1}:= \min\left(\overline{u}_j^{n+1}(\nu^m_j),\overline{u}_j^{n+1}(\nu^M_j)\right)$, $j\in \mathbb{Z}$.
\vspace{0.3cm}
\end{algorithmic}
\end{algorithm}
}
Note that one can also use  the following short representation 
\begin{equation}\label{eq:UB2}
	\overline{u}_j^{n+1}= S_j^{UB}(\overline{u}^n):=\min\left(\overline{u}_j^{n+1}(\nu^m),\overline{u}_j^{n+1}(\nu^M)\right),~ {j\in \mathbb{Z}}
\end{equation}
For simplicity we will use the following short notation of \UB~scheme. We recall the simplified flux form of UB scheme which is used in~\cite{DL01} where
\begin{equation}\label{eq:ubadv}
u_{j+1/2}^{n}:=\overline{u}^{n}_j + \frac{1-\nu_j}{\phi_j} (\overline{u}^{n}_{j+1}-\overline{u}^{n}_{j}),
\end{equation}
where $\phi_j$ is defined as
\begin{equation}\label{eq:phi}
\phi_j = 
\left\{
\begin{array}{ll} 
\max \left ( 0, \min \left( \frac{2r_j}{\nu_j}, \frac{2}{1-\nu_j}\right) \right), &\text{ if}\quad  \overline{u}^{n}_{j+1}=\overline{u}^{n}_j \quad \text{and}\quad \nu_j\neq 1\\
0, & \text{otherwise},\\
\end{array}
\right.
\end{equation}
where $r_j =\frac{\overline{u}^{n}_j-\overline{u}^{n}_{j-1}}{\overline{u}^{n}_{j+1}-\overline{u}^{n}_{j}}$. Replacing  $j=j-1$ we can compute $u_{j-1/2}^{n}$.\\
Finally, as we said, the \UB~scheme has been proved to transport exactly step functions when the velocity is constant. We will see in \S \ref{sec:prop} that it can be written in  an incremental form and this will be useful in some proofs.
\subsection{Coupled scheme \cite{S2018}}
As previously mentioned, \UB~scheme build upon prior conservation law results and typically involve a discontinuous reconstruction at each step. While this choice proves effective in non-regular solution regions, it falls short in regular solution areas. Therefore, a promising approach is to blend the strengths of two schemes: one (\SL)~well-suited for regular (at least Lipschitz continuous) solutions and an \UB~scheme that excels in preserving solution profiles at jumps. By combining these two schemes, we anticipate obtaining several advantages. To achieve this, we need the ability to identify both regular and singular regions.
The SL scheme employs a local interpolation operator to recover the numerical solution's value at the points where characteristics intersect the grid, rather than using cell averages, as is the case with \UB~schemes.
For coupling we need two different grids $G^{SL}$ and $G^{UB}$ give in Fig. \ref{fig:grids}.
\begin{figure}
\centering
	\begin{subfigure}{0.5\textwidth}
		\includegraphics[width=\textwidth]{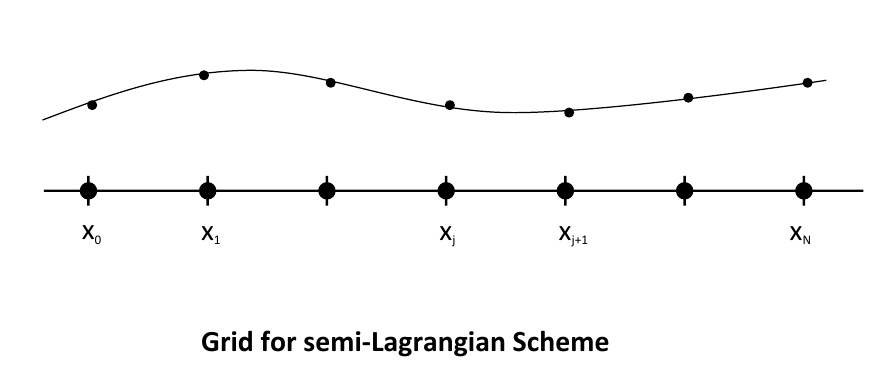}
		\caption{$G^{SL}:=\{x_j: x_{j}=j\Delta x, \; j=\mathbb{Z} \}$}
		\label{fig:first}
	\end{subfigure}
	\hfill
	\begin{subfigure}{0.5\textwidth}
		\includegraphics[width=\textwidth]{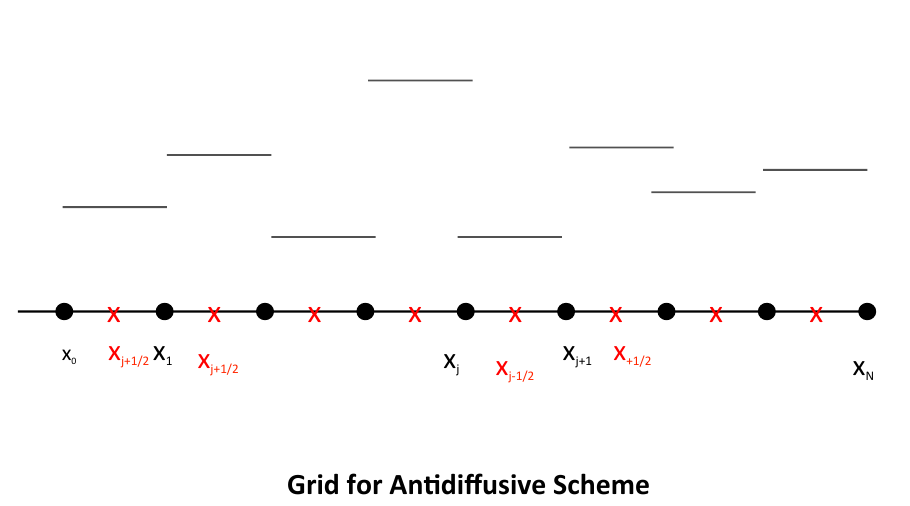}
		\caption{$G^{UB}:=\{\overline{x}_{j}: \overline{x}_{j}=x_{j}+\frac{\Delta x}{2},~j\in \mathbb{Z}\}$.}
		\label{fig:second}
	\end{subfigure}
		\caption{The two grids for \SL~and \UB~ schemes.}\label{fig:grid}
	\label{fig:grids}
\end{figure}
\bnodes the nodes of $G^{SL}$ and \cnodes  the nodes of $G^{UB}$.
In the sequel $u_{j}^{n}$ denotes an approximation of $u(x_{j},t_{n})$, and $\overline{u}_{j}^{n}$ denotes an approximation of $\overline{u}(\overline{x}_{j},t_{n})$, where $t_{n}=n\Delta t$, $\Delta t>0$. Moreover, we will drop the time index $n$ and denote for simplicity $u_{j} =u^{n}_{j}$ whenever the time dependence is not necessary.
At every step, we divide our domain into two regions, one where our approximate solution is ``regular'' and the other where we detect discontinuities. In order to make the readability and simplicity we recall all the definitions from \cite{S2018}.
Left and right derivatives for every node $x_j\in G^{SL}$  
\begin{equation}
	D^-u_{j}:=\frac{u_{j}-u_{j-1}}{\Delta x}~\hbox{ and }~
	D^+u_{j}:=\frac{u_{j+1}-u_{j}}{\Delta x} 
\end{equation}
\begin{definition}[Regular cell.]
Let $\delta$ be a positive threshold parameter. A cell $C_{j}=[x_{j}, x_{j+1})$ is said to be a {\em regular cell} if we have $|Du_{j}|< \delta$, $Du_{j}Du_{j-1}>0$ and $Du_{j}Du_{j+1}>0$. 
\end{definition}
\vspace*{0.25cm}
This means that a derivative below a given threshold as well as  a constant sign in the derivatives just before and after the node $x_j$ is considered to be a regularity indicator. For the choice of the threshold $\delta$ we can use a previous knowledge of the bounds for the exact solution. For example,  in the case of transport equation with the constant velocity, we know the solution which is $u(x,t)=u_{o}(x-ct)$, so we can set our threshold with the help of the initial condition $\delta=||Du^{0}_{j}||_{\infty}-\epsilon$, $\epsilon>0.$
\begin{figure}[!hbt]
\centering
\includegraphics[width=1\textwidth]{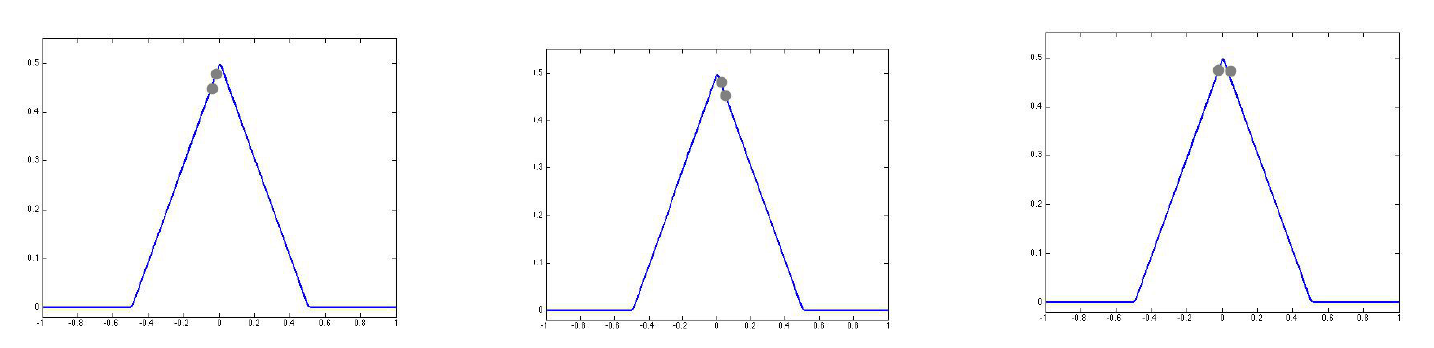}
\caption{ A sketch of three possible situations around a jump of the derivative.}
\end{figure}
\begin{definition}[Singular cell.]
A cell $C_{j}$ is said to be {\em a singular cell}  if it is not a regular cell. We denote the set of singular cells by $\mathcal{C}_{s}$.
\end{definition}
\vspace*{0.25cm}
\begin{definition}[Singular and regular region ]
The {\em singular region} $\Omega_{sin}$ is defined by the union of all the singular cells
The set $\Omega_{reg}=\mathbb{R}\setminus \Omega_{sin}$ is called the {\em regular region}.
\end{definition}
\vspace*{0.25cm}
\noindent
We need to distinguish between the nodes $x_j\in G^{SL}$ belonging to one of the above regions in order to apply the more adapt scheme there. To this end we define the {\em regularity indicator}, which will govern the switching between the two schemes: $\sigma_{j}\equiv 0$ for $x_j\in\Omega_{sin}$  and $\sigma_{j}\equiv 1$ for $x_j\in \Omega_{reg}$.  
\begin{definition} [{\em Local Projection Operator for SL}]
We define the local projection operator $P^{SL}: ~ \mathbb{R}^{2}\rightarrow \mathbb{R}$ by a map which defines the new value $u_j$ at $x_j$ starting from the values $(\overline{u}_{j-1/2},\overline{u}_{j+1/2})$,
\begin{equation}\label{def:locpsl}
P^{SL}( \overline{u}_{j-1/2}, \overline{u}_{j+1/2}):=\frac{\overline{u}_{j-1/2}+\overline{u}_{j+1/2}}{2}=u_{j}
\end{equation}
\end{definition}
\noindent
The $P^{SL}$ operator constructs the point value at $x_j$ as the average of the averaged values at $\overline x_{j-1}$ and $\overline x_j$.
\begin{definition} [{\em Local Projection Operator for UB}]
We define the local projection operator $P^{UB}: ~ \mathbb{R}^{2}\rightarrow \mathbb{R}$ by a map which defines the new value $\overline u_j$ at $x_{j+1/2}$ starting from the values $({u}_{j},{u}_{j+1})$,
\begin{equation}\label{def:locpad}
P^{UB}(u_j, u_{j+1}):=\frac{u_{j}+u_{j+1}}{2}=\overline{u}_{j+1/2}.
\end{equation}
\end{definition}
\noindent
The $P^{UB}$ operator constructs the averaged value at $\overline x_j$ as the average of the point values at $x_{j}$ and $x_{j+1}$.\\
The projection operators will be used locally whenever in a cell we switch from one scheme to the other and we need new values which were not available before. The $P^{SL}$ operator will also be used at Step 5 to allow the up-date of the regularity indicator which is computed on the \bnodes. In the sequel we will consider an initial condition $w^0$ with compact support $Q$ and define the subset $J:=[j_{min}, j_{max}]\subset\mathbb{Z}$ containing the node indices of an interval containing $Q$. Now we will give the algorithm for coupled shceme \ref{CSalgo} from \cite{S2018}
\begin{algorithm}[H]
	\caption{Algorithm for the Coupled (\SL+\UB) scheme}
	\noindent\makebox[\linewidth]{\rule{\textwidth}{0.4pt}}
	{\bf{Initialisation:}} We compute the initial data ${w_{j}^{0}=u^0_j }$ on every $x_j$, $j\in J$.\\
	We  compute $D^-w^0_{j-1}$, $D^-w^0_{j}$ and $D^-w^0_{j+1}$  and check the condition 
	\begin{equation} \label{cond:0}
		|D^-w^0_{j}|< \delta \hbox{ and }D^-w^0_{j-1}D^-w^0_{j}>0,~~ D^-w^0_{j}D^-w^0_{j+1}>0  \;.
	\end{equation}
	if condition \eqref{cond:0} is true then we set $\sigma^0_j=1$ else  $\sigma^0_j=0$.\\
	
	{\bf Main cycle  on $j\in J$},  For $n>0$. 
	\begin{algorithmic}
		\State {\bf Step 1.}  We  compute $D^-w^n_{j-1}$, $D^-w^n_{j}$ and $D^-w^n_{j+1}$  and check the condition 
		\begin{equation} \label{cond:1}
			|D^-w^n_{j}|< \delta \hbox{ and }D^-w^n_{j-1}D^-w^n_{j}>0,~~ D^-w^n_{j}D^-w^n_{j+1}>0  \;.
		\end{equation}
	    If condition \eqref{cond:1} is true then go to Step 2 else we go to Step 3.
	    \State{\bf Step 2.} We apply the \SL~scheme at $x_j$ and set $\sigma^n_{j}= 1$ at the node $x_j$.\\ 
	    If $\sigma_j^n=\sigma_j^{n-1}$ we directly compute the new value
	    according to the\SL~cheme
	    \begin{equation}
	    	w_{j}^{n+1}= \sigma^n_{j}S_j^{SL}[w^{n}]+(1-\sigma^n_{j})S_j^{UB}[w^{n}]=S_j^{SL}[w^n].
	    \end{equation} 
	    If $\sigma_j^n\not=\sigma_j^{n-1}$, we have to switch from  the $AD$-scheme to the \SL~scheme and we need the projection $P^{SL}$. Then, we set for $k=j, j+1$
	    $$w^n_k=P^{SL}( \overline{u}^n_{k-1/2}, \overline{u}^n_{k+1/2}):=\frac{\overline{u}^n_{k-1/2}+\overline{u}^n_{k+1/2}}{2}=u^n_{k}$$
	    and we compute 
	    \begin{equation}
	    	w_{j}^{n+1}= \sigma^n_{j}S_j^{SL}[w^{n}]+(1-\sigma^n_{j})S_j^{UB}[w^{n}]=S_j^{SL}[w^n],
	    \end{equation} 
%
If $\sigma_j^n\not=\sigma_j^{n-1}$, we have to switch from  the $AD$-scheme to the \SL~scheme and we need the projection $P^{SL}$. Then, we set for $k=j, j+1$
$$w^n_k=P^{SL}( \overline{u}^n_{k-1/2}, \overline{u}^n_{k+1/2}):=\frac{\overline{u}^n_{k-1/2}+\overline{u}^n_{k+1/2}}{2}=u^n_{k}$$
and we compute 
\begin{equation}
	w_{j}^{n+1}= \sigma^n_{j}S_j^{SL}[w^{n}]+(1-\sigma^n_{j})S_j^{UB}[w^{n}]=S_j^{SL}[w^n],
\end{equation} 

\end{algorithmic}\label{CSalgo}
\end{algorithm}
\begin{algorithm*}
\begin{algorithmic}	
	\State {\bf Step 4.} The condition \eqref{cond:1} is not satisfied, then  we set $\sigma^n_{j}= 0$.\\
	If $\sigma_j^n=\sigma_j^{n-1}$ we directly compute the new value
	according to the $AD$-scheme
	\begin{equation}
		\overline w_{j}^{n+1}= \sigma^n_{j}S_j^{SL}[w^{n}]+(1-\sigma^n_{j})S_j^{UB}[w^{n}]=S_j^{UB}[w^n].
	\end{equation} 
	If $\sigma_j^n\not=\sigma_j^{n-1}$, we have to switch from  the $SL$-scheme to the $AD$-scheme and we need the projection $P^{UB}$. Then, we set for $k=j-1, j, j+1$
	$$\overline w^n_k=P^{UB}( {u}^n_{k}, {u}^n_{k+1}):=\frac{{u}^n_{k}+{u}^n_{k+1}}{2}=\overline u^n_{k+1/2}$$
	and we compute 
	\begin{equation}
		\overline w_{j}^{n+1}= \sigma^n_{j}S_j^{SL}[w^{n}]+(1-\sigma^n_{j})S_j^{UB}[\overline w^{n}]=S_j^{UB}[\overline w^n],
	\end{equation} 
	{\bf End of the $j$ cycle}.
\State
{\bf Step 5} (Filling the holes procedure)\\
	At  the \bnodes where $\sigma_j^n=0$ we need to project by $P^{SL}$ defined in \eqref{def:locpsl} using the intermediate  values at $\overline w_{j-1}^{n+1}$ and $\overline w_{j}^{n+1}$, i.e.
	$$w^{n+1}_j=P^{SL}(\overline w_{j-1}^{n+1}, \overline w_{j}^{n+1}), \hbox{ for } \sigma_j=0 .$$
	(the values $w_{j}^{n+1}$ for the \bnodes  where $\sigma_j=1$ are already available by Step 4). This will finally produce  the new approximate solution $w^{n+1}_j$.
\State	
{\bf Step 6}
Set $n=n+1$, $j=j_{min}$ and go back to the Main cycle. \hfill$\Box$
\end{algorithmic}	
\vspace{0.3cm}
Note that at the \bnodes where $\sigma^n_j=1$ we always have a value which is computed by the \SL~scheme and that the switching indicator is chosen on the basis of the values at the \bnodes.
\end{algorithm*}
\section{Some properties of the coupled \SL+\UB~ scheme}\label{sec:prop}
This section is devoted to the analysis of some interesting  properties for  the coupled scheme. We will study these  properties for the advection problem, the extension to the non linear problem is rather difficult and  is still under study. However, at the end of the paper we will present also a test for an HJ equation which shows that the coupling procedure is also effective for nonlinear problems and deserves further analysis. Let us start introducing some classical definitions.
\begin{definition}\label{def:TVD}
The {\em  Discrete Total Variation} of a vector $u=\{u_j\}_{j\in\mathbb{Z}}$ is given by
\begin{equation}
TV(u):= \sum_{j\in \mathbb{Z}}|u^n_{j}-u^n_{j+1}|. 
\end{equation}
\end{definition} 
\noindent
This definition is the discrete analogue of the continuous total variation for a continuous function.  
\begin{definition} 
We say that a scheme is {\em Total Variation Diminishing (TVD)}  if for all $n\geq 0$ , 
\begin{equation}
TV(u^{n+1}) \leq TV(u^n).
\end{equation}
where $u^n$ is the approximate solution at time $t_n$.
\end{definition}
\begin{definition} 
We say that a scheme is {\em Total Variation Bounded (TVB)} if for any initial condition such that $TV(u^0)<\infty$ and time $T$ there exists a positive constant $C$, and a value $\Delta t_0$ such that
\begin{equation}
TV(u^{n}) \leq C
\end{equation}
for all  $n\Delta t\le T$ whenever $\Delta t<\Delta t_0$ (again $u^n$ denotes the approximate solution at time $t_n$).
\end{definition} 
This definitions dates back to the first papers on high-order approximation schemes for scalar conservation laws (see \cite{H83, H84}). The fact that the total variation is decreasing in time is a typical feature of entropy solution to scalar conservation laws. Moreover, the control of the total variation gives a control on the oscillations of the scheme.  For a detailed analysis of the role of the above property in the analysis of high-order approximation schemes we refer to the monographs  \cite{L92} and  \cite{GR96}.
Following Harten \cite{H84} say that a {\em scheme is in incremental form} if it can be written as 
\begin{equation}\label{eq:TVDH}
u_j^{n+1}= u_{j}^n-C_{j-\frac{1}{2}} (u^n_j-u^n_{j-1})
+D_{j+\frac{1}{2}} (u^n_{j+1}-u^n_{j}),
\end{equation}
where $C_{j-\frac{1}{2}}, ~D_{j+\frac{1}{2}} \in \mathbb{R}$.  We recall  \cite {H83} that a scheme in incremental form is TVD if and only if the following sufficient  conditions are satisfied for all $j$:
\begin{equation}
0 \leq C_{j-\frac{1}{2}},~D_{j+\frac{1}{2}}~\text{and}~C_{j-\frac{1}{2}}+D_{j+\frac{1}{2}}\leq 1.
\end{equation}
For the \UB~scheme it is relevant to recall the following definitions.
\begin{definition}\label{def:stableinf} The \UB~scheme is {\em $L^\infty$- stable} if the following conditions hold:
\begin{eqnarray}\label{eq:stableinf}
\hbox{for }\nu_j \geq 0, \; \min({u}^n_{j},{u}^n_{j-1}) \leq{u}^{n+1}_{j} \leq \max({u}^n_{j},{u}^n_{j-1}),\\
\hbox{for }\nu_j < 0, \;  \min({u}^n_{j},{u}^n_{j+1}) \leq{u}^{n+1}_{j} \leq \max({u}^n_{j},{u}^n_{j+1}).
\end{eqnarray}
\end{definition} 
\noindent
It is clear that above definition of $L^\infty$- stability implies the standard definition of $L^\infty$- stability  which requires
\begin{equation}\label{eq:SL_inf_sta}
\|{u}^{n}\|_{L^\infty} \leq C  \|{u}^0\|_{L^\infty}, \forall n\in\mathbb{N}
\end{equation}
Infact,  by definiton \ref{def:stableinf} one gets 
\begin{equation}
-\Vert u^n\Vert _\infty\le  \min(u^n_{j-1}{u}^n_{j},{u}^n_{j+1}) \leq{u}^{n+1}_{j} \leq \max(u^n_{j-1}, {u}^n_{j},{u}^n_{j+1})\le\Vert u^n\Vert _\infty
\end{equation}
which easily implies \eqref{eq:SL_inf_sta}.
\begin{definition}\label{def:cons}
We say that UB shceme is {\em consistent} if all the fluxes ${u}^{n, L}_{j+\frac{1}{2}}$ and ${u}^{n, R}_{j+\frac{1}{2}}$ satisfy:
\begin{eqnarray}\label{eq:consist}
\hbox{for }\nu_j \ge  0, \;\min(\overline{u}^n_{j},\overline {u}^{n}_{j-1} ) \leq {u}^{n,L}_{j+\frac{1}{2}}\leq \max(\overline{u}^n_{j},\overline{u}^n_{j-1}),\\
\hbox{for }\nu_j  < 0, \;  \min(\overline{u}^n_{j},\overline{u}^{n}_{j+1} ) \leq {u}^{n,R}_{j+\frac{1}{2}}\leq \max(\overline{u}^n_{j},\overline{u}^n_{j+1}).
\end{eqnarray}
\end{definition} 
\noindent
As we said, if  $|\nu_j|\le 1$ for every $j$,  the \UB~ scheme is consistent, $L^\infty$ stable and TVD. These properties will now be extended to the coupled scheme using the definition of our projection operators on the two grids $G^{SL}$ and $G^{UB}$.

\vspace{0.3cm}\noindent
{\bf Properties of the Coupled Scheme for the advection equation}\\
Let us consider the following model problem
\begin{equation}\label{eq:adv}
\left\{ \begin{array}{l}
u_t+cu_x=0, ~ x \in \mathbb{R}, \; t\in [0,T]  \\
u(x,t)=u_0(x)
\end{array} \right.\end{equation}
where $c$ is a constant velocity. In the following, we will continue to use the notations $u^n_j$, $\overline{u}^n_j$ and $w^n_j$ respectively for the values computed by the  \SL,~\UB~ and coupled scheme at time $n$ and at the node $j$ of their respective grids (shifted by $\Delta x/2$). 
We consider the particular coupled scheme obtained by a the \SL~scheme and the \UB~scheme:
\begin{equation}
w_{j}^{n+1}= \sigma^n_{j}S_j^{SL}[w^{n}]+(1-\sigma^n_{j})S_j^{UB}[w^{n}],
\end{equation}
with the two projections \eqref{def:locpsl} or \eqref{def:locpad} (as explained in the coupled scheme algorithm we use projection only when it is needed by $\sigma^n_j$).
When we are in the regular region  the above coupled scheme coincides with the \SL~scheme. For $c>0$, let $\nu:= c\frac{\Dt}{\Dx}<1$, we have   $x_j-c\Dt \in (x_{j-1}, x_j]$ obtaining the following \SL~scheme 
\begin{equation}\label{eq:sladv}
w_{j}^{n+1}=u^{n+1}_j= S_j^{SL}(u^{n}) :=\nu u^n_{j-1} + (1-\nu) u^n_{j}
\end{equation}
Although $\Dt$ can in general be rather big  as \SL~schemes typically work for large Courant numbers here we will set $\nu=1$ because the coupling is made with the \UB~scheme which needs that condition for stability. This limitation will be compensated by the higher accuracy at the jumps given by the \UB~scheme.
Note that the following properties of the projection operators play an important role:
\begin{equation}\label{eq:4}
\min({u}^n_{j} ,{u}^n_{j+1} ) \leq \overline{u}^n_{j} = P^{UB}  (u^n_{j}, u^n_{j+1} )=\frac{u^n_{j}+u^n_{j+1}}{2}\leq \max ({u}^n_{j} ,{u}^n_{j+1} )
\end{equation}
\begin{equation}\label{eq:5}
\min({\overline u}^n_{j} ,{\overline u}^n_{j+1} ) \leq u^n_{j} = P^{SL}  (\overline  u^n_{j}, \overline u^n_{j+1} )=\frac{\overline u^n_{j}+ \overline u^n_{j+1}}{2}\leq \max ({\overline u}^n_{j} ,{\overline u}^n_{j+1} )
\end{equation}
\noindent
In order to clarify which values are really involved in the computation we will keep the notations with $u$ and $\overline u$ instead of $w$. However, these values are computed according to the coupled algorithm already described in \S 3.
\begin{proposition}\label{prop1}
Let us consider the advection problem \eqref{eq:adv} and let $|\nu| \le 1$. The coupled scheme SL+UB is $L^\infty$-stable.
\end{proposition}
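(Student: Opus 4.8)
The plan is to reduce the statement to a single \emph{bound-preserving} property shared by every elementary operation that the coupled algorithm performs, and then to propagate this property through one full time step. Concretely, I would introduce the quantity
\[
M^n := \max\Bigl(\max_{j}\,|u^n_j|,\ \max_{j}\,|\overline u^n_j|\Bigr),
\]
taken over all point values and cell-averaged values stored at time $t_n$ (finite thanks to the compact support of $w^0$), and aim to show $M^{n+1}\le M^n$. Iterating this inequality and using $\|u^n\|_{L^\infty}\le M^n\le M^0$ yields exactly \eqref{eq:SL_inf_sta} with $C=1$, which is the standard notion of $L^\infty$-stability.

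The heart of the argument is that each of the four building blocks of the coupled scheme outputs a value lying between the minimum and the maximum of the inputs it consumes, hence bounded in modulus by $M^n$. First, on the regular region the scheme reduces to the \SL~advection update: for $0\le\nu\le1$ this is the convex combination \eqref{eq:sladv}, $u^{n+1}_j=\nu u^n_{j-1}+(1-\nu)u^n_j$, so $\min(u^n_{j-1},u^n_j)\le u^{n+1}_j\le\max(u^n_{j-1},u^n_j)$ (the case $c<0$ is symmetric, a convex combination of $u^n_j$ and $u^n_{j+1}$). Second, on the singular region the scheme is the \UB~update, whose $L^\infty$-stability (Definition \ref{def:stableinf}) states precisely that the new value is squeezed between neighbouring cell averages. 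Third and fourth, the two projection operators are simple averages, so by \eqref{eq:4} and \eqref{eq:5} each of $P^{UB}$ and $P^{SL}$ returns a value between the min and the max of its two arguments. Thus every operation keeps its output within $[-M^n,M^n]$.

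It then remains to traverse the coupled algorithm of \S\ref{sec:background} (Algorithm \ref{CSalgo}) and check that no stage leaves the band $[-M^n,M^n]$. When $\sigma^n_j=\sigma^{n-1}_j$ the value is produced directly by one of the two schemes and the bound is immediate. When $\sigma^n_j\neq\sigma^{n-1}_j$ a projection precedes the update: in the \SL~branch the point values $w^n_k=P^{SL}(\overline u^n_{k-1/2},\overline u^n_{k+1/2})$ are formed from stored time-$n$ averages and are therefore $\le M^n$, after which the \SL~convex combination preserves the bound; symmetrically, in the \UB~branch the averages $\overline w^n_k=P^{UB}(u^n_k,u^n_{k+1})$ are $\le M^n$ and the \UB~update preserves it. Finally, the filling-the-holes procedure (Step 5) sets $w^{n+1}_j=P^{SL}(\overline w^{n+1}_{j-1},\overline w^{n+1}_j)$ from the intermediate \UB~values produced in Step 4; since those were already shown to be $\le M^n$, the averaging keeps them $\le M^n$. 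I expect the only real difficulty to be the bookkeeping across the two staggered grids $G^{SL}$ and $G^{UB}$ together with the switch/projection logic: one must verify that every intermediate value fed into a projection is drawn either from time-$n$ data or from Step-4 outputs already bounded by $M^n$, so that the chain of inequalities never closes on an as-yet-unbounded quantity. Once this ordering is confirmed, $M^{n+1}\le M^n$ follows by induction and the proposition is proved.
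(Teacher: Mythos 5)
Your proposal is correct and takes essentially the same route as the paper's proof: both arguments rest on the facts that the \SL~update \eqref{eq:sladv} is a convex combination for $|\nu|\le 1$, that the \UB~update satisfies the stability property of Definition \ref{def:stableinf}, and that the projections $P^{SL}$ and $P^{UB}$ are averages obeying \eqref{eq:4}--\eqref{eq:5}, so that every branch of the algorithm (switch or no switch) keeps the new value between the min and max of the data it consumes. Your packaging via the quantity $M^n$ and an explicit induction --- including the filling-the-holes Step 5, which the paper's four-case analysis leaves implicit --- is just a more systematic bookkeeping of the same argument.
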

\begin{proof}
Note that for a constant velocity $c$, choosing  $\lambda=\Delta t/\Delta x$ small enough the parameter $\nu$ will be always lower than 1. This means that only the first neighbouring cells will appear in the stencil for the \SL~and for the \UB~scheme. 
We will consider four cases at a generic node $x_j$: two are related to the situation where the parameter $\sigma^n_j$ remains constant passing from step $t_{n-1}$ to $t_n$ whereas the remaining two cases refer to the switching case.\\

\noindent
{\bf Case 1:} $\sigma^n_j=\sigma^{n-1}_j= 1$, i.e.~no switch is needed. We continue to apply at the node $x_j$ the \SL~scheme with local piecewise linear reconstruction on the neighbouring cells $[x_{j-1}, x_j]$ and $[x_j, x_{j+1}] $ so 
$$ \min({u}^n_{j-1} ,  {u}^n_{j}, {u}^n_{j+1} ) \leq {u}^{n+1}_{j}\le  \max({u}^n_{j-1} ,  {u}^n_{j}, {u}^n_{j+1} ).$$
and this implies the same for $w$.\\

\noindent
{\bf Case 2:} $\sigma^n_j=\sigma^{n-1}_j= 0$, i.e. no switch is needed. We continue to apply the \UB~scheme and the property at the node $j$ comes from the fact that the \UB~scheme satisfies the stability property  \eqref{def:stableinf}.\\

\noindent
{\bf Case 3:} $\sigma^n_j=1$ and $\sigma^{n-1}_j= 0$. We switch from the \UB~scheme to the \SL~scheme so we need to use local projection operator \eqref{def:locpsl}.
More precisely, we assign (if necessary) to the \bnodes  $x_{j-1}$, $x_j$ and $x_{j+1}$  the values obtained by averaging the values computed by the \UB~scheme at the corresponding \cnodes and we apply the \SL~scheme to compute $u^{n+1}_j$. This will produce a new value satisfying
$$  \min(\overline{u}^n_{j}, \overline{u}^n_{j+1} ) \leq {u}^{n+1}_{j}\le  \max( \overline{u}^n_{j}, \overline{u}^n_{j+1} )$$
which by construction implies
$$ \min({u}^n_{j-1} ,  {u}^n_{j}, {u}^n_{j+1}) \leq {u}^{n+1}_{j}\le  \max({u}^n_{j-1} ,  {u}^n_{j}, {u}^n_{j+1} ).$$

\noindent
{\bf Case 4:} $\sigma^n_j=0$ and $\sigma^{n-1}_j= 1$. We switch from the \SL~scheme to the \UB~scheme so we need to use the local projection operator \eqref{def:locpad}. 
More precisely, the values $u^n_{j}$ has been computed by \SL~scheme because $\sigma_{j}^{n-1}$=1 and we assume that also the neighbouring values $u^n_{j-1}$ and $u^n_{j+1}$ are available (if not they can be obtained averaging by the projection operator $P^{UB}$ \eqref{def:locpad}). \\
This gives   
\begin{equation}\label{eq:7}
P^{UB}(w^{n}_{j-1}, w^n_{j})=P^{UB}(u^n_j, u^n_{j+1})=\frac{u^n_{j}+u^n_{j+1}}{2}=\overline{u}^n_{j},
\end{equation}
and since $\sigma_{j}^{n}=0$ the coupled scheme will compute the  value
\begin{equation}\label{eq:8}
w_{j}^{n+1}=S_j^{UB}[w^{n}]
\end{equation}
Then the $L^\infty$  bound  is satisfied by \eqref{eq:4} and the stability property of the  \UB~scheme.
\end{proof}

\begin{proposition}\label{prop2}
We consider the advection problem \eqref{eq:adv} and let $| \nu| \le 1$. The coupled scheme SL+UB is TVB. 
\end{proposition}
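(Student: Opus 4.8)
The plan is to reduce the statement to three ingredients and then control the coupling: (i) each constituent scheme is TVD on the region where it is active, (ii) the two projection operators do not increase the discrete total variation, and (iii) the switching at the finitely many interfaces between the regular and singular regions contributes only a controlled amount. Since TVB is weaker than TVD, I do not need global monotonicity of the coupled operator (indeed the \UB~scheme is non-monotone); it suffices to bound $TV(w^n)$ by a constant depending only on $T$, $\|u_0\|_\infty$ and $TV(u_0)$, uniformly for $\Delta t<\Delta t_0$. Note that a crude ``number of cells times $\|w^n\|_\infty$'' estimate is useless here, because it degenerates as $\Delta x\to 0$; the genuine BV structure must be used.

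First I would record the TVD property of the two building blocks. For $c>0$ and $0\le\nu\le 1$ the SL update \eqref{eq:sladv} is the convex combination $u_j^{n+1}=u_j^n-\nu(u_j^n-u_{j-1}^n)$, i.e. Harten's incremental form \eqref{eq:TVDH} with $C_{j-1/2}=\nu\ge 0$, $D_{j+1/2}=0$ and $C_{j-1/2}+D_{j+1/2}=\nu\le 1$; by the criterion recalled from \cite{H83} it is TVD, the case $c<0$ being symmetric. For the \UB~scheme I invoke the TVD property already stated for $|\nu|\le 1$, through the incremental form announced in \S\ref{sec:background}. Second, I would check that neither projection inflates the total variation: from \eqref{def:locpad} one has $\overline u_{j+1/2}-\overline u_{j-1/2}=\tfrac12(u_{j+1}-u_{j-1})$, so summing and using the triangle inequality gives $TV(P^{UB}[u])\le TV(u)$, and the identical computation on \eqref{def:locpsl} yields $TV(P^{SL}[\overline u])\le TV(\overline u)$. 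Hence the bookkeeping needed when switching grids is, by itself, harmless for the variation.

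Third, and at the heart of the matter, I would estimate $TV(w^{n+1})$ by the same four-case decomposition used in Proposition \ref{prop1}. Splitting the line into maximal runs of regular cells and of singular cells, the contribution of each run is bounded by the TVD property of the scheme active there, together with step (ii) whenever a projection is invoked at a switch, via the local bounds \eqref{eq:4}--\eqref{eq:5} which keep every updated value inside the range of its neighbours. What remains are the finitely many interface differences $|w_{j+1}^{n+1}-w_j^{n+1}|$ straddling a regular/singular boundary; by the local min-max bounds from Proposition \ref{prop1} each is controlled by the local oscillation of $w^n$, hence by $2\|w^n\|_\infty\le 2C\|u_0\|_\infty$. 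For the advection problem the number of such interfaces is twice the number of connected components of $\Omega_{sin}$, which for compactly supported BV data is bounded uniformly in $n$ and in the mesh, since the singularities are merely transported with speed $c$ and do not proliferate.

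The main obstacle is to turn this per-step interface estimate into a bound that does not accumulate over the $T/\Delta t$ steps. The route I would follow is to prove that away from switching the coupled operator is genuinely TVD, so those steps cannot increase $TV$, and that a switch $\sigma_j^n\neq\sigma_j^{n-1}$ at a given feature can be charged, through the non-expansiveness of the projections and the exact-transport structure at jumps (the \UB~scheme is exact on step functions, and for $\nu=1$ the SL update \eqref{eq:sladv} is a pure shift), to a one-off rearrangement that leaves $TV$ unchanged rather than to a recurring $O(1)$ increment. Making this charging argument precise — showing that the two-grid projection at an interface reproduces, after transport, the same local profile and therefore does not manufacture spurious variation at every step — is the delicate point; everything else is the routine assembly of the TVD estimates above.
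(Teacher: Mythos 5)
Your steps (i) and (ii) are sound and essentially reproduce the paper's Cases 1 and 2: the SL update \eqref{eq:sladv} is a convex combination (Harten's incremental form with $C_{j-1/2}=\nu$, $D_{j+1/2}=0$), the UB scheme is TVD in incremental form, and the mid-point projections $P^{SL}$, $P^{UB}$ are TV non-increasing when applied globally. The genuine gap is in your third step, and you flag it yourself: the whole difficulty of the proposition is to prevent the switch contributions from accumulating over the $O(T/\Delta t)$ time steps, and your proposed remedy --- a ``charging'' argument showing each switch is a one-off rearrangement that leaves $TV$ unchanged --- is never carried out. As sketched it is also doubtful: the proposition covers all $|\nu|\le 1$, and for $\nu\in(0,1)$ the SL step is not a pure shift, so the exact-transport structure you invoke exists only in the special case $\nu=1$; moreover, after a local projection the profile near a jump is genuinely modified (cell averages and point values do not coincide there), so there is no reason the pre-switch and post-switch local variations match exactly. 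Your auxiliary claim that the number of regular/singular interfaces is bounded uniformly in $n$ \emph{and in the mesh} is likewise asserted, not proved: the indicator $\sigma^n_j$ is defined through discrete-derivative thresholds and sign conditions \eqref{cond:1}, and nothing in your argument excludes that refinement produces more and more cells where that condition fails.

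For comparison, the paper does not attempt your sharper claim. It accepts a per-step increase: in its Case 3 it bounds the switching contribution by $2\overline M\nu\Vert w^n\Vert_\infty$ (with $\overline M$ the number of switching nodes), invokes the $L^\infty$ bound of Proposition \ref{prop1}, and iterates back to $n=0$ using $\nu=c\Delta t/\Delta x$ and the crude bound $\overline M\le M=(b-a)/\Delta x$, arriving at $TV(w^{n+1})\le TV(w^0)+2M^2cT\Vert w^0\Vert_\infty/(b-a)$; an alternative estimate in the same proof uses the regularity threshold, $|w^n_j-w^n_{j-1}|<\delta\Delta x$ on regular cells, to get $TV(w^{n+1})\le TV(w^n)+(b-a)\delta$ per step. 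These are exactly the ``number of cells times $\Vert w^n\Vert_\infty$'' bounds you dismiss as useless; your objection that such constants degenerate as $\Delta x\to 0$ is a fair criticism of the paper's own constant, but it puts the burden on you to supply a mesh-uniform substitute, and your proposal stops precisely where that work begins. As it stands, your text is a proof plan whose decisive lemma is missing, not a proof.
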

\begin{proof}
To prove the TVB property we still have to examine four cases as in the previous proposition. We will give the proof for $\nu\ge0$, when $\nu$ is negative the proof can be easily adapted. As we will see, in some cases when we do not switch we will have a stronger property, i.e. the scheme will be TVD. When we have a switch we just have the TVB property.

\vspace{0.2cm}\noindent
{\bf Case 1:} For $\sigma^n_j=\sigma^{n-1}_j=1$ for every $j$, so no switch is needed and we will always apply \SL~at all the nodes. 
Let us prove that the scheme is TVD. We have 
\begin{equation}\label{eq:9}
u^{n+1}_{j}=w^{n+1}_{j}= S_{j}^{SL}[w^{n}]
\end{equation}
which means 
\begin{equation}\label{eq:sl_p1}
u^{n+1}_j= \nu u^n_{j-1} + (1-\nu) u^n_{j}
\end{equation}
where $\nu \in (0,1]$. So for the difference we get
\begin{eqnarray}
|u^{n+1}_{j+1}-u^{n+1}_j|&=& | \nu u^{n}_{j} + (1-\nu) u^{n}_{j+1}-(\nu u^{n}_{j-1} + (1-\nu) u^{n}_{j})| \le\\
&\le&  | \nu (u^{n}_{j} -u^{n}_{j-1} )+(1-\nu) (u^{n}_{j+1} -u^{n}_{j} )|\nonumber
\end{eqnarray}
Summing on $j$ we obtain
\begin{equation}
\sum_{j\in \mathbb{Z}}|u^{n+1}_{j+1}-u^{n+1}_j|\leq \sum_{j\in \mathbb{Z}} | \nu (u^{n}_{j} -u^{n}_{j-1} )+(1-\nu) (u^{n}_{j+1} -u^{n}_{j} )|
\end{equation}
so
\begin{equation}
TV(u^{n+1}) \leq \nu TV(u^{n})+ (1-\nu) TV(u^{n})
\end{equation}
for $ \nu \in [0,1]$, which implies for the approximate solution of the coupled scheme 
\begin{equation}\label{eq:sl_tvd}
TV(w^{n+1}) =TV(u^{n+1}) \leq TV(u^{n})= TV(w^{n}).
\end{equation}

\vspace{0.2cm}\noindent
{\bf Case 2:} For $\sigma^n_j=\sigma^{n-1}_j=0$ for every $j$, so no switch is needed and we will always apply \UB~at all the nodes.  For $\nu\ge0$ we have 
\begin{equation}\label{eq:10}
w_{j}^{n+1/2}=S_j^{UB}[w^{n}]
\end{equation}
so we can also write for every $j\in \mathbb{Z}$
$${w}^{n+1}_j= \overline{u}^{n+1}_j= \overline{u}^n_j- C_{j-1/2}(\overline{u}_j^n-\overline{u}_{j-1}^n)$$
with $C_{j-1/2}\in [0,1]$, i.e.~\eqref{eq:TVDH} with $D_{j+ 1/2}=0$. 
Hence we have the incremental form \eqref{eq:TVDH} with $C_{j+ 1/2}+D_{j+1/2} \leq 1$. 
Thus the scheme is TVD. For $\nu<0$ we will have the a similar expression where the coefficient $C_{j+ 1/2}$ vanishes and $D_{j+ 1/2}>0$, so again we will have the TVD property.\\

\vspace{0.2cm}\noindent
{\bf Case 3:}  $\sigma^n_j=1$ and $\sigma^{n-1}_j= 0$. In addition, we assume that $\sigma^n_{j-1}=1$. The scheme switches at $x_j$ from the \UB~to \SL.~For $\nu\ge 0$, let us examine the total variation in the interval $D=[x_{j-1}, x_{j+1}]$, i.e. in the union of cells whose nodes are used in the switch. We define ${J}:=\{j-1,j\}$ and we denote by   $Var_{{ A}}(w)$ the variation of a vector $w$ over ${ A}$, i.e.

\begin{equation}\label{eq:varj}
Var_{{J}}(w):= \sum_{k=j-1}^j |w_k-w_{k+1}|
\end{equation}
Recalling the definition of $P^{SL}$ we have 
\begin{eqnarray}\label{eq:var1}
Var_J(w^{ {n+1}})&&= \sum_{k=j-1}^j |w_k^{ {n+1}}- w_{k+1}^{ {n+1}}|\\
&& =|w_{j-1}^{ {n+1}}- w_j^{ {n+1}}|+|w_j^{ {n+1}}-w_{j+1}^{ {n+1}}|\\
&& =\left |u{ ^{n+1}_{j-1}} - \frac{\overline{u}_{j-1}^{ ^{n+1}}+\overline{u}_{j}^{ ^{n+1}}}{2}\right |+ \left |\frac{\overline{u}_{j-1}^{ ^{n+1}}+\overline{u}_{j}^{ ^{n+1}}}{2}-u_{j+1}^{ ^{n+1}} \right |\nonumber
\end{eqnarray}
If there is a switch from \SL~to \UB~scheme then by equation \eqref{cond:0} and \eqref{cond:1}, we have
\[
|D^-w_j|>\delta.
\]
If we are in regular region that means $\sigma^n_j=1$  then we have
\[
\big|\frac{w^n_j-w^n_{j-1}}{\Delta x} \big|< \delta \\ 
\Rightarrow
|w^n_j-w^n_{j-1}|< \delta \Delta x
\]
\noindent
Now we apply the \SL~scheme to these nodes and we assume ${ \nu}\ge 0$ (the opposite sign can be treated in a similar way).    For every $k\in A$ we have 
\begin{equation}\label{eq:SLevol}
w^{n+1}_{k}= S_{k}^{SL}[w^{n}]= \nu w^n_{k-1}+(1-\nu)w^n_{k}
\end{equation}
Now we want to obtain a bound for $Var_J(w^{n+1})$. By applying \eqref{eq:SLevol} and simply reordering the terms as we have done in the above proof of Case 1, we have
\begin{equation}
Var_{ A}(w^{n+1})=\sum_{k=j-1}^j |\nu (w^n_{k-1}-w^n_k)+(1-\nu)(w^n_k-w^n_{k+1})|
\end{equation}
{  which implies, since $\nu$ and $(1-\nu) \in[0,1]$  ,
\begin{eqnarray}\label{eq:boundJ}
\quad\quad Var_{A}(w^{n+1})&&\leq |w^n_{j-2}-w^n_{j-1}|+ |w^n_{j-1}-w^n_{j}|+ |w^n_{j}-w^n_{j+1}|\\\nonumber
&&\leq |w^n_{j-2}-w^n_{j-1}|\}+ Var_J(w^n)\\\nonumber
&&\leq Var_{J \cup \{j-2\}}(w^n). \nonumber
\end{eqnarray}}
which implies, since $\nu\in[0,1]$ 
\begin{eqnarray}\label{eq:boundJ1}
\quad\quad Var_J(w^{n+1})&&=\nu |w^n_{j-2}-w^n_{j-1}|+|w^n_{j-1}-w^n_j|+(1-\nu)|w^n_j-w^n_{j+1}|\\
&&\le 2 \nu \max\{|w^n_{j-2}|,|w^n_{j-1}|\}+ Var_J(w^n) \nonumber
\end{eqnarray}
To obtain a uniform bound for every time horizon let us take $T=N \Delta T$ and denote by $\overline M$ the maximum number of switches at every iteration, clearly $\overline M$ is bounded by the total number of nodes $M$  in the (compact) support of the solution $w^n$ (we can always assume that they are all contained in the interval $[a,b]$ and that $\Delta x=(b-a)/M$). Let us also denote by $J^n_0$  the set of indices corresponding to the nodes where at time $t_n$ there is no switch and by $J^n_1$ the set of nodes where there is a switch. Clearly, at time $t_n$ a node must belong either to $J^n_0$ or to $J^n_1$. {  Let us consider number of elements is  $J_1$ is $\bar{M}_n$}. For every $0<n\le N-1$,  we have
{ 
\begin{eqnarray}
\quad TV&&(w^{n+1})= \sum_{k\in {J^n_0}} |w^{n+1}_k-w^{n+1}_{k+1}|+\sum_{k\in {J^n_1}} |w^{n+1}_k-w^{n+1}_{k+1}|+ \bar{M_n} \delta \Delta x\\\nonumber
&&\le\sum_{k\in {J^n_0}\cup {J^n_1}} |w^{n}_k-w^{n}_{k+1}|+  \bar{M}_n\delta \Delta x\\\nonumber
&&\le\sum_{k\in {J^n_0}\cup {J^n_1}} |w^{n}_k-w^{n}_{k+1}|+  M \delta \Delta x, \qquad \text{since}\quad\bar{M}_n \leq M\\\nonumber 
&&\le TV(w^n)+ (b-a)\delta.
\end{eqnarray}
Other possibility
\begin{eqnarray}
\quad TV&&(w^{n+1})= \sum_{k\in {J^n_0}} |w^{n+1}_k-w^{n+1}_{k+1}|+\sum_{k\in {J^n_1}} |w^{n+1}_k-w^{n+1}_{k+1}|\\\nonumber
&&\le 2\sum_{k\in {J^n_0}} |w^{n}_k-w^{n}_{k+1}|+   \sum_{k\in {J^n_1}} |w^{n}_k-w^{n}_{k+1}|\\\nonumber
&&\le TV(w^n)+\sum_{k\in {J^n_0}} |w^{n}_k-w^{n}_{k+1}|\\\nonumber 
&&\le TV(w^n)+ \delta \Delta x Card(J^n_0)\\\nonumber 
&&\le TV(w^n)+ \delta \Delta x M\\\nonumber
&&\le TV(w^n)+ (b-a)\delta. 
\end{eqnarray}
In both the cases we obtained the same bound.

}

\begin{eqnarray}
\quad TV&&(w^{n+1})= \sum_{k\in {J^n_0}} |w^{n+1}_k-w^{n+1}_{k+1}|+\sum_{k\in {J^n_1}} |w^{n+1}_k-w^{n+1}_{k+1}| \\
&&\le\sum_{k\in {J^n_0}} |w^{n+1}_k-w^{n+1}_{k+1}|+\sum_{k\in {J^n_1}} |w^{n}_k-w^{n}_{k+1}|+2\overline{M} \nu \Vert w^n\Vert_\infty\nonumber\\
&&\le\sum_{k\in {J^n_0}\cup {J^n_1}} |w^{n}_k-w^{n}_{k+1}|+ 2\overline{M} \nu \Vert w^n\Vert_\infty= TV(w^{n})+ 2\overline{M} \nu \Vert w^n\Vert_\infty\nonumber
\end{eqnarray}
where we have used the fact that for $j\in J^n_0$ the total variation is non increasing. By the $L^\infty$ bound proved in Proposition \ref{prop1} we obtain $\Vert w^n\Vert_\infty\le \Vert w^0\Vert_\infty$. Recalling the definition of $\nu$, we iterate back to $n=0$ obtaining
\begin{eqnarray}
\quad\quad TV&&(w^{n+1})\le TV(w^{n})+ 2\overline{M} \nu \Vert w^n\Vert_\infty\le TV(w^0)+ 2\overline{M}  n\nu \Vert w^0\Vert_\infty\\
&&\le TV(w^0)+ 2M^2 c \frac{T}{b-a} \Vert w^0\Vert_\infty\nonumber
\end{eqnarray}
and this gives the uniform bound for the total variation.
The proof for $\nu<0$ can be easily adapted.

\vspace{0.2cm}\noindent
{\bf Case 4:} $\sigma^n_j=0$ and $\sigma^{n-1}_j= 1$. In addition, we assume that $\sigma^n_{j-1}=0$. The scheme switches from the \SL~to \UB~scheme. 
We can first get a bound for the variation on the cell next to the $x_j$ node by applying the projection $P^{UB}$. Then, we can divide the indices into two subsets as in Case 3, and we can obtain a similar upper bound on $TV(w^{n+1})$ by using the $L^\infty$ bound for the \UB~scheme. The proof follows in the same way as in Case 3.
\end{proof}

\begin{proposition}\label{prop3}
Let $ |\nu| \le 1$, then the coupled scheme \SL+\UB  is consistent with equation \eqref{eq:adv}. 
\end{proposition}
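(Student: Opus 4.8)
The plan is to verify the usual local-truncation-error notion of consistency: substituting a smooth exact solution $u$ of \eqref{eq:adv} into the coupled update $w_j^{n+1}=\sigma_j^n S_j^{SL}[w^n]+(1-\sigma_j^n)S_j^{UB}[w^n]$ and checking that the residual, divided by $\Dt$, tends to zero as $\Dx,\Dt\to0$ with $\nu=c\,\Dt/\Dx$ held fixed. Since consistency is a statement about the part of the solution where $u$ is smooth, I would argue on the regular region and on its interface with the singular region, reusing the same four-case split employed in Propositions \ref{prop1} and \ref{prop2}.

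First I would record the consistency of the two constituent schemes separately. For the \SL~scheme on \eqref{eq:adv}, formula \eqref{eq:sladv} gives $S_j^{SL}(u^n)=\nu u_{j-1}^n+(1-\nu)u_j^n$, the first-order upwind discretisation; a Taylor expansion of the exact solution about $(x_j,t_n)$ yields $u(x_j,t_{n+1})-S_j^{SL}(u(\cdot,t_n))=O(\Dt^2+\Dt\,\Dx)$, so the \SL~truncation error is $O(\Dx)$. For the \UB~scheme I would use its conservative flux form \eqref{eq:UB1}--\eqref{eq:ubadv}, observing that the numerical flux reduces to the exact flux $c\,u$ on constant states and, for smooth monotone data, approximates it to first order; this is exactly the flux-bound consistency of Definition \ref{def:cons}, from which the associated truncation error also vanishes.

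Next I would dispatch the four cases. In Case 1 ($\sigma_j^n=\sigma_j^{n-1}=1$) the scheme is pure \SL~and in Case 2 ($\sigma_j^n=\sigma_j^{n-1}=0$) it is pure \UB, so consistency is inherited directly from the previous step. The substance lies in the switching Cases 3 and 4, where the projection operators \eqref{def:locpsl} and \eqref{def:locpad} are invoked. Here the key estimate is that, for smooth $u$, the cell average \eqref{average} equals the pointwise value at the cell centre up to $O(\Dx^2)$, and since $\overline{x}_{j-1}$ and $\overline{x}_j$ straddle $x_j$ symmetrically, the midpoint averages satisfy
\[
P^{SL}(\overline{u}_{j-1/2},\overline{u}_{j+1/2})=u(x_j)+O(\Dx^2),\qquad P^{UB}(u_j,u_{j+1})=u(\overline{x}_j)+O(\Dx^2).
\]
Thus each projection perturbs the stencil values by $O(\Dx^2)$, contributing at most $O(\Dx^2/\Dt)=O(\Dx)$ to the truncation error; combined with the consistency of the sub-scheme applied after the projection, the coupled update in Cases 3 and 4 is consistent as well, and collecting the four cases gives the claim.

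I expect the main obstacle to be making the switching cases rigorous. One must justify that consistency is genuinely required only on the regular region, where $u$ is smooth and the Taylor expansions and the $O(\Dx^2)$ projection estimates are legitimate, and that the interface nodes where the switch occurs still lie close enough to this smooth region for those estimates to hold at the relevant resolution. A secondary technical point is confirming that the anti-dissipative \UB~flux, with its limiter $\phi_j$ in \eqref{eq:phi} and the $\min/\max$ clippings in \eqref{linEquGrad1}--\eqref{linEquGrad4}, does reduce to a first-order-consistent flux on smooth data rather than staircasing it; this amounts to checking that the clipping is inactive for smooth profiles so that \eqref{eq:ubadv} behaves like a genuine consistent flux.
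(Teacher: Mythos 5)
Your four-case decomposition and your treatment of the \SL-related cases match the paper's proof almost exactly: the paper's Case 3 is precisely your midpoint-rule estimate, namely that the $P^{SL}$ projection reproduces the point value of a smooth function from the two neighbouring cell averages up to $C\Delta x^2$, and Cases 1--2 are dismissed by inheritance from the component schemes, as you do. The divergence --- and the gap --- is in how consistency is measured on the \UB~side. You impose the classical local-truncation-error notion in all four cases, which forces you to claim that the \UB~flux is first-order accurate on smooth data because ``the clipping is inactive for smooth profiles.'' You flag this as a secondary technical point, but under your chosen framework it is the crux of Cases 2 and 4, and it is not established: the ultra-bee limiter is deliberately compressive (it selects the most downwind-biased flux compatible with the TVD bounds), its hallmark behaviour is to turn smooth profiles into staircases, and whether the clipping binds depends on $\nu$ and on the local monotonicity of the data, so a uniform first-order flux-accuracy statement cannot simply be asserted or cited. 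As written, your proof of Cases 2 and 4 therefore rests on an unproved claim.

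The paper avoids this issue entirely by using two different notions of consistency: truncation-error consistency for the \SL~cases (Cases 1 and 3), and the weak flux-bound notion of Definition~\ref{def:cons} for the \UB~cases (Cases 2 and 4), which only requires the numerical fluxes to lie between the $\min$ and $\max$ of the neighbouring cell averages. That property holds for the \UB~scheme by construction of the min/max clipping in \eqref{linEquGrad1}--\eqref{linEquGrad4}, and after a switch it follows for the projected values from the elementary inequality \eqref{eq:4},
\[
\min(u_j,u_{j+1}) \;\le\; P^{UB}(u_j,u_{j+1}) \;\le\; \max(u_j,u_{j+1}),
\]
so no Taylor expansion is needed on the \UB~side at all. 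If you weaken your Cases 2 and 4 to this notion, your argument closes and coincides in substance with the paper's; if you insist on a genuine truncation-error statement for the \UB~scheme on smooth data, you owe a proof of a claim that the paper deliberately does not make.
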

\begin{proof}
Consistency is a local property and  it will be inherited by the same property of the two schemes used to construct the coupled scheme because, as we will see below, the two projection operators are defined as centred averages.

\vspace{0.2cm}\noindent
{\bf Case 1:} $\sigma^n_j=\sigma^{n-1}_j=1$  no switch is needed. We already know that \SL~scheme is consistent and that  is locally first order accurate (see  \cite{FF14} for details), so the property is true.

\vspace{0.2cm}\noindent
{\bf Case 2:} $\sigma ^n_j=\sigma^{n-1}_j=0$ and no switch is needed. \UB~scheme is also consistent according to \eqref{def:cons} (see \cite{BFZ10} for details).\\

\vspace{0.2cm}\noindent
{\bf Case 3 :} $\sigma ^n_j=1$ and $\sigma^{n-1}_j=0$ so we switch from the \UB~scheme to the \SL~scheme. The property follows from the way we have defined the projection  $P^{SL}$ on the grid and the consistency of the \SL~scheme.
We need to project on $x_j$, using (at most) the values at the neighbouring \cnodes computed at the previous iteration (the index $n$ is dropped for simplicity)
\begin{equation*}
u(x_j)=\frac{\overline{u}_{j-1}+\overline{u}_j}{2}
\end{equation*}
Then, by construction, 
\begin{equation*}
u(x_j)= \frac{1}{2\Delta x} \int_{x_{j-1}}^{x_{j}} u(x) dx + \frac{1}{2\Delta x} \int_{x_{j}}^{x_{j+1}} u(x) dx = \frac{1}{2\Delta x} \int_{x_{j-1}}^{x_{j+1}} u(x) dx 
\end{equation*}
Since $u(x_j) 2\Delta x$ is the approximate value corresponding to the mid-point rule applied to the integral in $[x_{j-1}, x_{j+1}] $, for a regular function we get the following estimate
\begin{equation}
|u(x_j)-  \frac{1}{2\Delta x} \int_{x_{j-1}}^{x_{j+1}} u(x) dx |\le C \Delta x^2
\end{equation}
which, by the consistency of the \SL~scheme, guarantees the local consistency for the coupled scheme.

\vspace{0.2cm}\noindent
{\bf Case 4 :} $\sigma ^n_j=0$ and $\sigma^{n-1}_j=1$ so we switch from the \SL~scheme to the \UB~scheme.
Now we need (at most)  the values at the neighbouring \cnodes with respect to $x_j$ and they can obtained by projection. The projection $P^{UB}$ defines (again the index $n$ is dropped for simplicity)
\begin{equation*}
\overline u_j=\frac{{u}_{j+1}+{u}_j}{2}
\end{equation*}
so recalling that  $\overline x_j=x_j+\Delta x /2$, we have 
\begin{equation}
\min (u_{j}, u_{j+1})\le \overline{u}_j\le \max ( u_{j}, u_{j+1})
\end{equation}
For the definition of the fluxes of the \UB~scheme we will also need $\overline{u}_{j-1}$ or $\overline{u}_{j+1}$ but also for these values we will have  similar bounds.
This implies that the coupled scheme is consistent according to the definition \eqref{def:cons}.
\end{proof}


\section{Numerical tests}\label{sec:cs_numtests}
In this \S, we present some numerical tests in one-dimension. We use different initial conditions with varying smoothness and track their time evolution over $\Omega$. The coupled scheme improves the accuracy by switching between schemes using $\sigma_j^n$. The extra cost of computing $\sigma_j^n$ is minimal as we only project the cells that switch. We start solving the advection equation with constant and variable velocity. Then, we give an example for an evolutive HJ equation~\eqref{eq:hj1} where, starting from a smooth initial condition, we follow the onset of a singularity at an intermediate time. We compare the proposed coupled scheme with the two schemes used as building blocks. To this end, we will consider several initial conditions with various regularity properties and we follow their evolutions in time over an interval $\Omega$. We will compute the errors in $L^1(\Omega)$, $L^2(\Omega)$ and, in some cases, in $L^\infty (\Omega_{reg})$ to show also the behavior in the regular region.  In our examples $\Omega_{reg}:=\Omega\setminus \Omega_{sing}$ and $\Omega_{sing}:= \cup_s B(\overline x_s, \varepsilon)$ where $\overline x_s$ denotes a point where the derivative or the solution itself has a jump.\\
\begin{example}\label{ex:adv} 
{\bf Advection equation with constant velocity.}\\
We consider the  advection equation \eqref{eq:adv}
where $c \equiv1$ is the velocity and $v_0(x)$ is the initial condition with bounded support, $\Omega:=(-2,2)$, $T=2$ and the Courant number $\nu=c{\Delta t}/{\Delta  x}$ which remains constant at 0.9 throughout all the simulations. In this example, we explore two different initial conditions, referred to as \eqref{eq:smooth} and \eqref{eq:jump}. \\
\noindent
{\em Test 1: Smooth initial condition $v_0$.} 
\begin{equation}\label{eq:smooth}
v_0(x)=\left\{\begin{array}{ll}
(1-|x|^2)^4, &\quad \hbox{for }  |x|\leq1\\
0&\quad\textrm{otherwise}
\end{array}\right.
\end{equation}
note that the derivative is 0 at the junction points $x=\pm 1$. It is clear that the solution remains smooth in the evolution hence the SL~scheme should have a better accuracy with respect to the UB~scheme. Moreover, since the slope is not high, we expect the coupled scheme to select always the SL~scheme. Fig.~\ref{fig:smooth_adv1}, shows the solution of~\eqref{eq:adv} at time $t=20\Delta t$ with time step $\Delta t=0.045$ for the initial data~\eqref{eq:smooth}. 
Fig. ~\ref{fig:sigma_smooth_adv1} shows the plots of  the switching parameter $\sigma$ for different time $t=10\Delta t,~20\Delta t,~30\Delta t$, with $\Delta t=0.045$.  As we expect,  switching parameter $\sigma\equiv 1$ for the coupled scheme. Table~\ref{tab:adv_ub_smooth} and \ref{tab:adv_cs_smooth} show the error tables for the UB and for the coupled scheme respectively. The error tables of the coupled and of the SL coincide on this test since the switching indicator is able to recognise that the solution is smooth enough and there are no jumps.
\begin{center}
	\begin{figure}[!hbt]
		\begin{tabular}{cc}
			\includegraphics[width=0.4\textwidth]{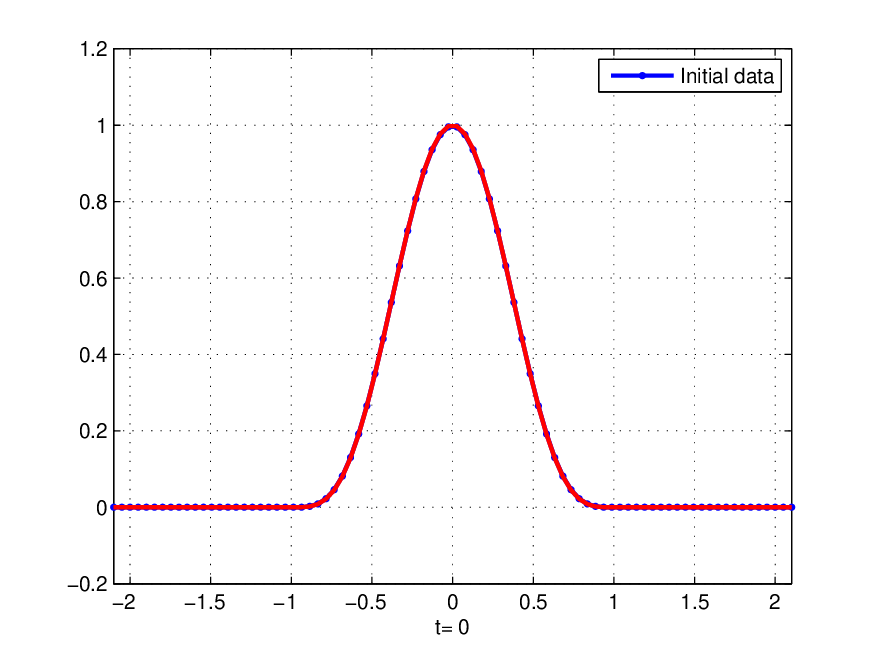}&
			\includegraphics[width=0.4\textwidth]{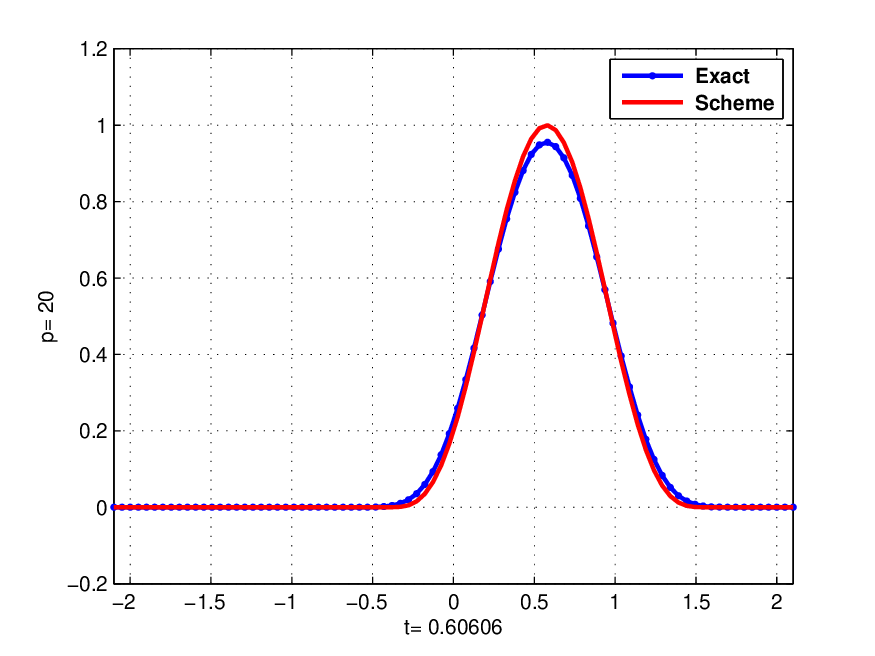}  \\
			\includegraphics[width=0.4\textwidth]{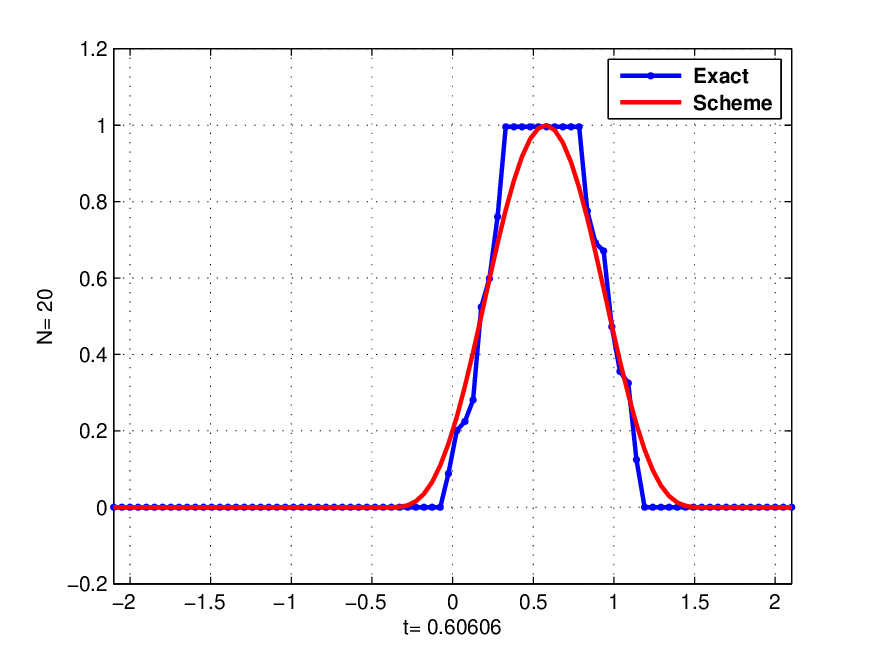} &
			\includegraphics[width=0.4\textwidth]{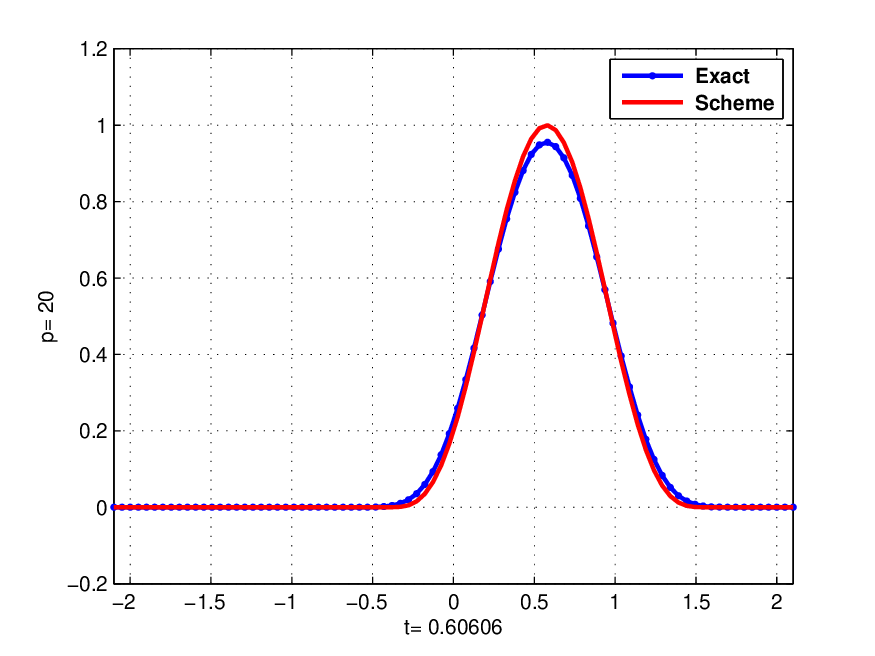}
		\end{tabular}
		\caption{\small Example \ref{ex:adv}, test 1: plots of the solutions for $t=20\Delta t$, $\Delta t=0.045$. 
			Top: initial data \eqref{eq:smooth} (left), \SL~scheme (right) and bottom: \UB~scheme (left), coupled scheme (right).}
		\label{fig:smooth_adv1}
	\end{figure}
	\begin{figure}[!hbt]
		\includegraphics[width=0.3\textwidth]{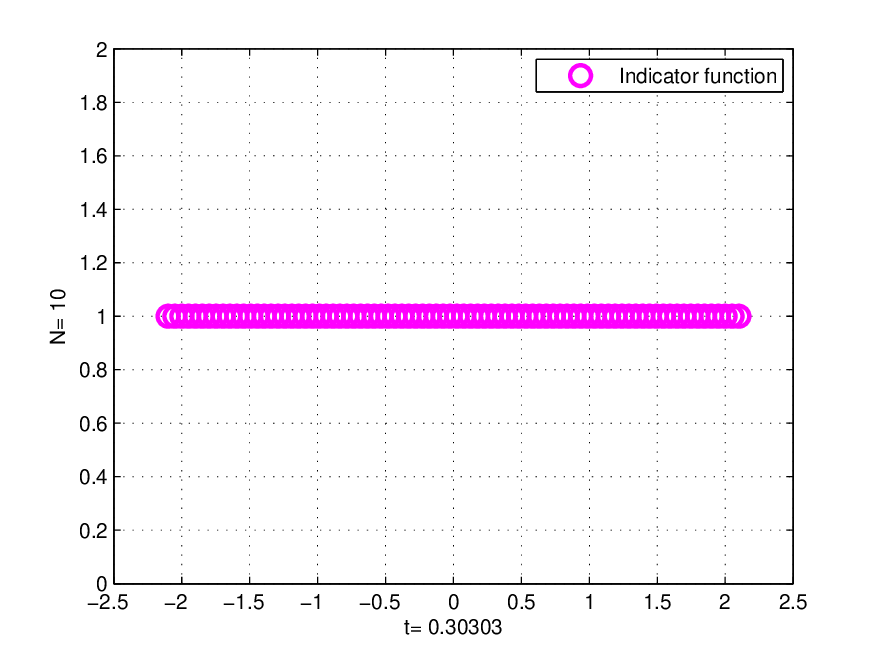}
		\includegraphics[width=0.3\textwidth]{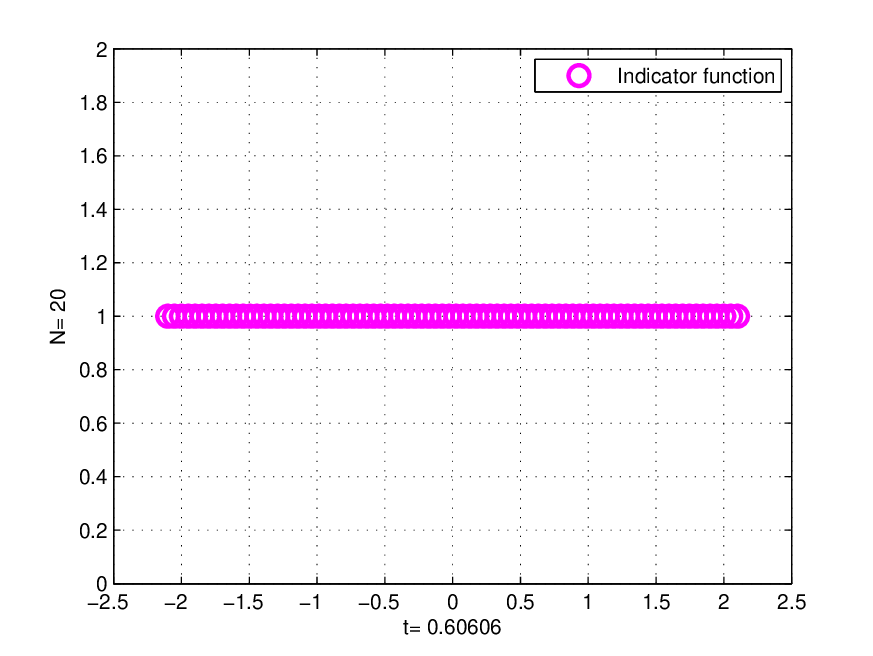}
		\includegraphics[width=0.3\textwidth]{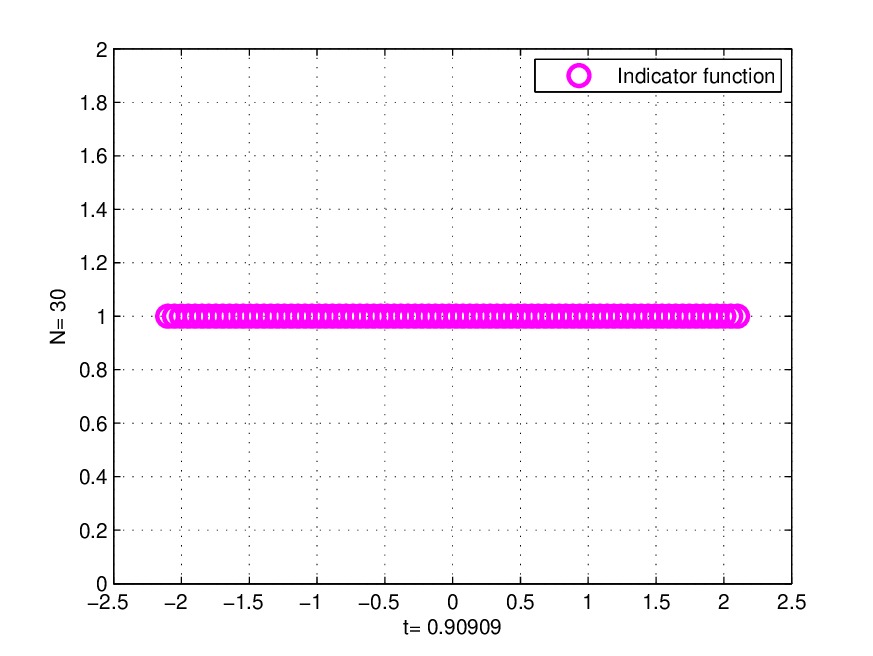}\\
		\caption{\small{Example \ref{ex:adv}, test 1: the plot of the indicator function $\sigma$ at $t=10\Delta t,~20\Delta t,~30\Delta t$ for $\Delta t=0.045.$}}
		\label{fig:sigma_smooth_adv1}
	\end{figure}
\end{center}
\begin{center}
	\begin{table}[!hbt]
\captionsetup{width=1\textwidth}
		\begin{tabular}{|c|c|c|c|c|}
				\hline
				$\Delta t$ & $\Delta x$ &$L^1$ Error & $L^2$ Error& $L^\infty$ Error \\
				\hline
				0.181818 & 0.210526 &9.04E-002&8.49E-002&1.10E-001\\
				0.090909 &0.102564  &4.32E-002&4.27E-002&7.68E-002\\
				0.045455 &0.050633  &2.17E-002&2.22E-002&5.56E-002\\
				0.022472&0.025157  &1.27E-002&1.27E-002&3.59E-002\\
				0.011236 &0.012539  &6.49E-003&6.68E-003&2.20E-002\\
				0.005634 &0.006260  &3.34E-003&3.50E-003&1.09E-002\\
				\hline
		\end{tabular}
	\caption{Example \ref{ex:adv}, test 1: errors for the \UB~scheme with initial condition~\eqref{eq:smooth} at time $T=2$.}
	\label{tab:adv_ub_smooth}
	\end{table}
	\vspace*{-1cm}
	\begin{table}[!hbt]
		\captionsetup{width=1\textwidth}
		\begin{tabular}{|c|c|c|c|c|}
				\hline
				$\Delta t$ & $\Delta x$ &$L^1$ Error & $L^2$ Error& $L^\infty$ Error \\
				\hline
				0.181818 & 0.210526 &7.36E-002&5.93E-002&7.44E-002\\
				0.090909 &0.102564  &3.49E-002&2.84E-002&3.64E-002\\
				0.045455 &0.050633  &1.67E-002&1.37E-002&1.75E-002\\
				0.022472 &0.025157  &8.87E-003&7.28E-003&9.31E-003\\
				0.011236 &0.012539  &4.38E-003&3.60E-003&4.60E-003\\
				0.005634 &0.006260  &2.14E-003&1.76E-003&2.25E-003\\
				\hline
		\end{tabular}\caption{Example \ref{ex:adv}, test 1: errors for the coupled \SL~+ \UB~scheme with initial condition~\eqref{eq:smooth} at time $T=2$.}
		\label{tab:adv_cs_smooth}
	\end{table}
\end{center}
{\em Test 2:  Discontinuous initial condition $v_0$.}
\begin{equation}\label{eq:jump}
	v(0,x)=v_0(x)=\left\{\begin{array}{lll}
		1&\qquad&  \text{if}\quad|x|\leq1\\
		0&\qquad&\textrm{otherwise}.
	\end{array}\right.
\end{equation}
For the piecewise discontinuous initial data UB~scheme is already good so we expect the coupled scheme to switch to UB~scheme.   
Fig.~\ref{fig:jump_adv}, shows the solution of~\eqref{eq:adv} at time $t=20\Delta t$ with $\Delta t=0.045$ for the initial data~\eqref{eq:jump}. Fig.~\ref{fig:sigma_jump_adv}, shows the plots of $\sigma$ for different times $t=10\Delta t,~20\Delta t,~30\Delta t$ where $\Delta t=0.45$. In Fig.~\ref{fig:sigma_jump_adv}, we can see that $\sigma=0$ everywhere and hence coupled scheme is the same as \UB~scheme. Moreover one can see that once the scheme switches to the UB scheme ($\sigma=0$) it keeps that choice on a larger number of cells in order to follow the jump. Tables ~\ref{tab:adv_ub_jump} and~\ref{tab:adv_sl_jump}, show the error tables respectively for the coupled, SL scheme respectively. Here coupled scheme is same as UB scheme.
\begin{center}
\begin{figure}[!hbt]
		\begin{tabular}{cc}
			\includegraphics[width=0.4\textwidth]{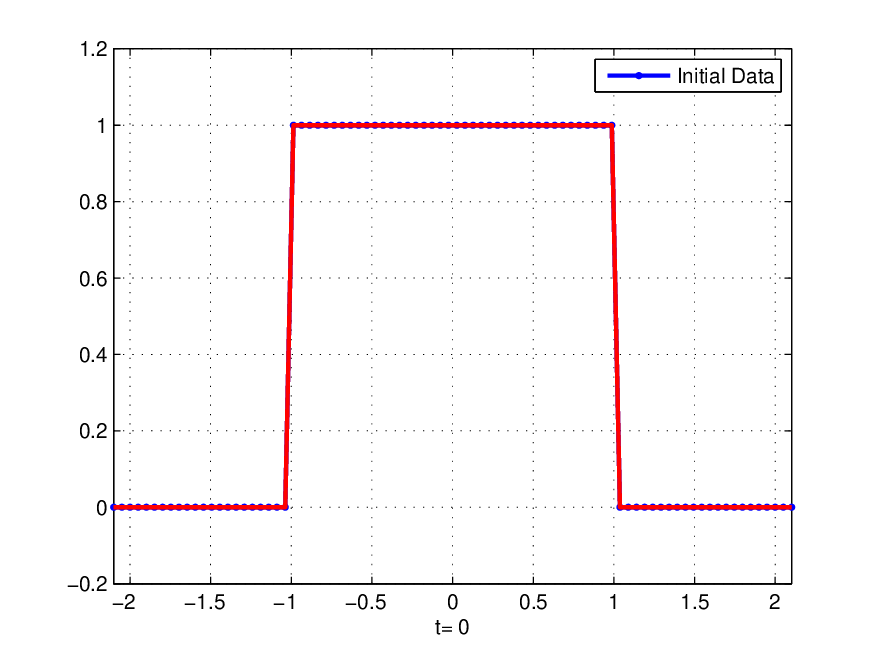}&
			\includegraphics[width=0.4\textwidth]{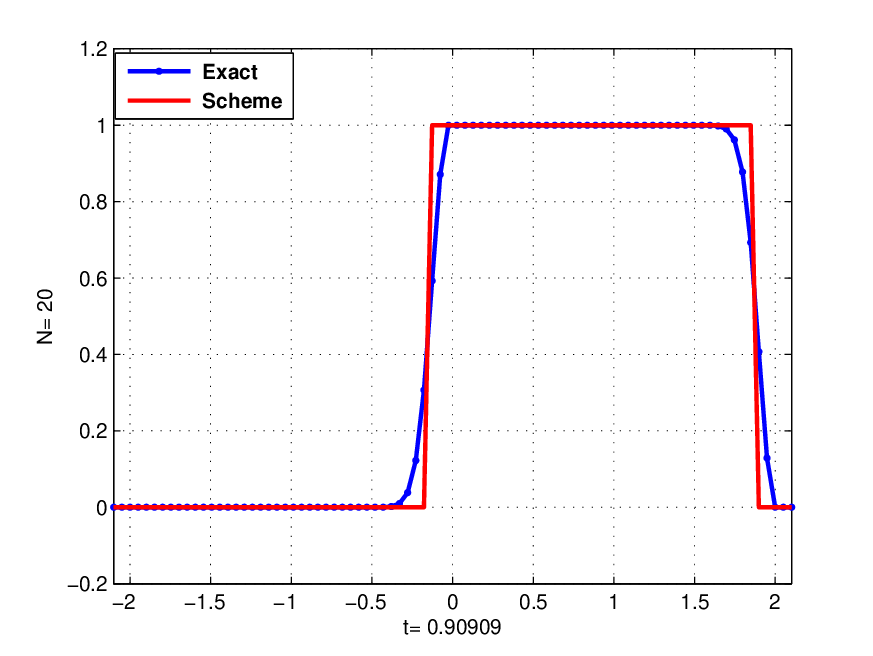}\\
			\includegraphics[width=0.4\textwidth]{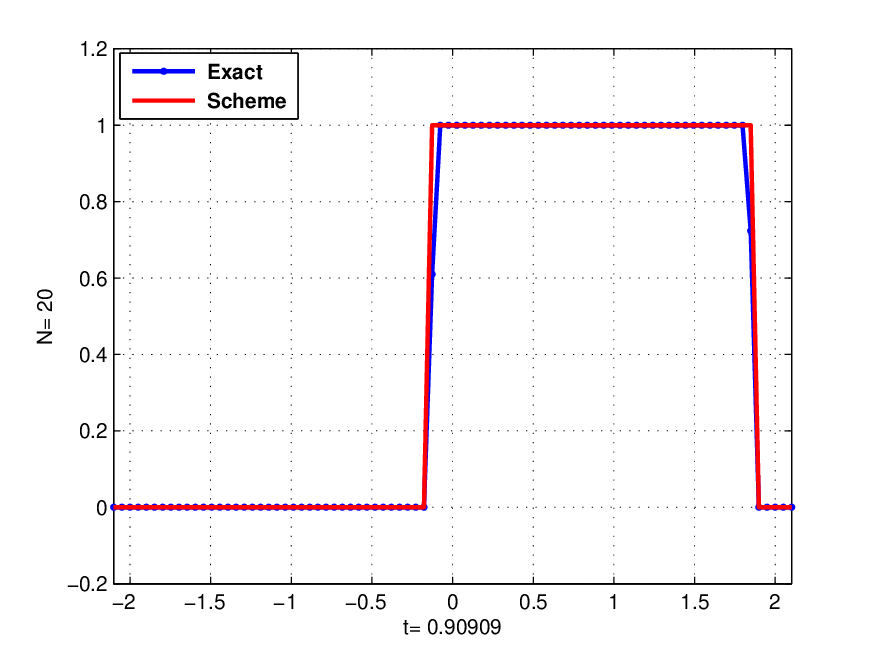} &
			\includegraphics[width=0.4\textwidth]{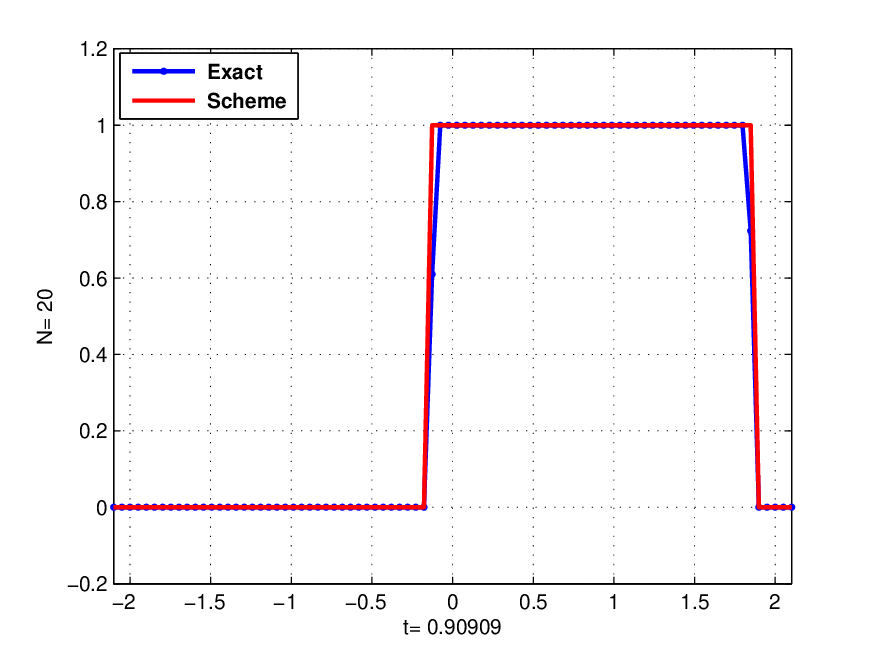}
		\end{tabular}
		\caption{\small Example \ref{ex:adv}, test 2: plots of the solutions for $t=20\Delta t$, $\Delta t=0.045$. 
			Top: initial data~\eqref{eq:jump} (left), \SL~scheme (right).  Bottom: \UB~scheme (left), coupled scheme (right).}
		\label{fig:jump_adv}
	\end{figure}
	\end{center}	
\begin{center}
	\begin{figure}[!hbt]
		\centering
		\includegraphics[width=0.3\textwidth]{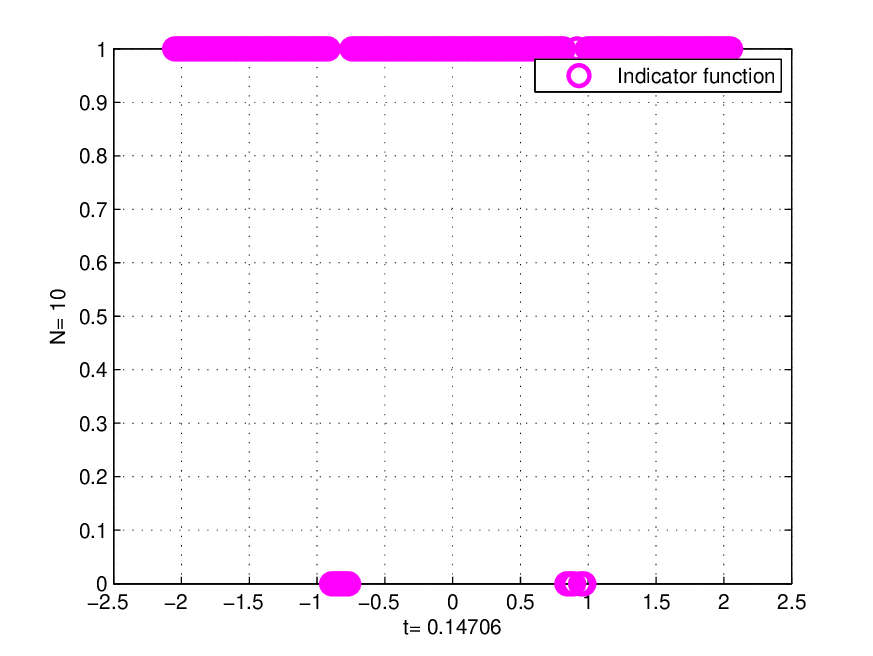}
		\includegraphics[width=0.3\textwidth]{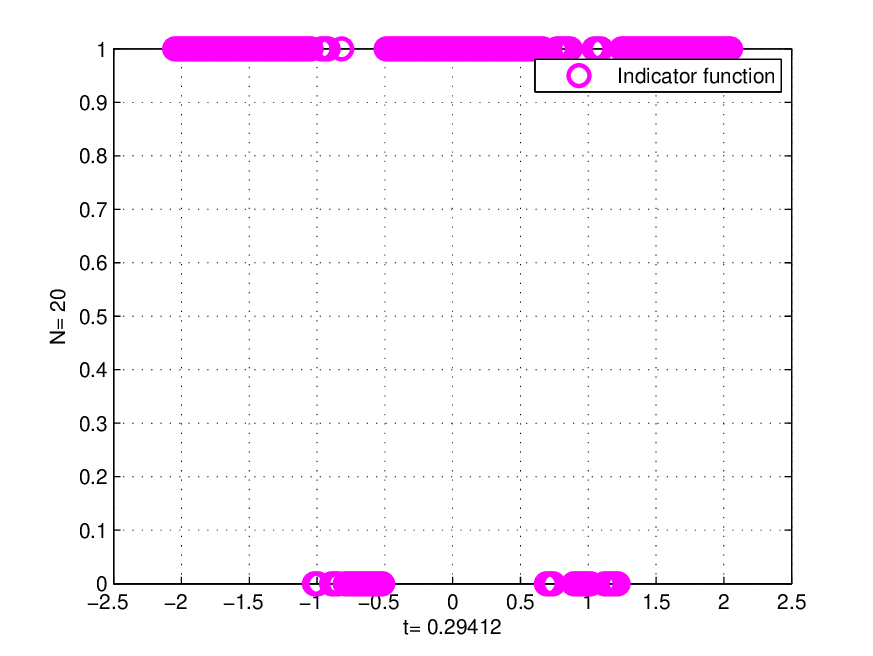}
		\includegraphics[width=0.3\textwidth]{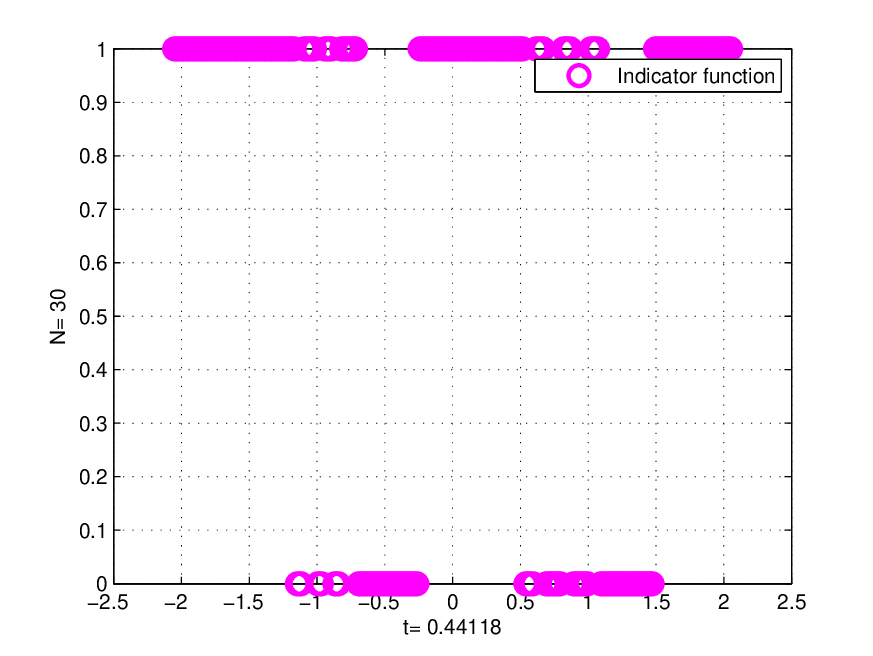}\\
		\caption{Example \ref{ex:adv}, test 2: the plot of the indicator function $\sigma$ for \eqref{eq:jump} at $t=10\Delta t,~20\Delta t,~30\Delta t$ where $\Delta t=0.045$.}  
		\label{fig:sigma_jump_adv}
	\end{figure}
\end{center}	
	\begin{table}[!hbt]
	\captionsetup{width=1\textwidth}
		\begin{tabular}{|c|c|c|c|c|}
				\hline
				$\Delta t$ & $\Delta x$ &$L^1$ Error & $L^2$ Error& $L^\infty$ Error \\
				\hline
				0.181818 & 0.210526 &7.02E-002&1.12E-001&2.12E-001\\
				0.090909 &0.102564  &6.84E-002&1.51E-001&3.56E-001\\
				0.045455 &0.050633  &3.38E-002&1.08E-001&3.90E-001\\
				0.022472 &0.025157  &1.68E-002&8.32E-002&4.96E-001\\
				0.011236 &0.012539  &8.36E-003&6.24E-002&5.44E-001\\
				0.005634 &0.006260  &4.17E-003&3.73E-002&3.33E-001\\
				\hline
		\end{tabular}\caption{Example~\ref{ex:adv}, test 2: errors for the \SL+\UB~ scheme with initial condition~\eqref{eq:jump} and $T=2$.}
		\label{tab:adv_ub_jump}
	\end{table}
	\begin{table}[!hbt]
			\captionsetup{width=1\textwidth}	
		\begin{tabular}{|c|c|c|c|c|}
				\hline
				$\Delta t$ & $\Delta x$ &$L^1$ Error & $L^2$ Error& $L^\infty$ Error \\
				\hline
				0.181818 & 0.210526 &2.53E-001&3.32E-001&5.56E-001\\
				0.090909 &0.102564  &1.43E-001&1.99E-001&3.38E-001\\
				0.045455 &0.050633  &1.03E-001&1.72E-001&4.07E-001\\
				0.022472 &0.025157  &7.57E-002&1.47E-001&4.05E-001\\
				0.011236 &0.012539  &5.37E-002&1.25E-001&4.38E-001\\
				0.005634 &0.006260  &3.77E-002&1.05E-001&4.61E-001\\
				\hline
		\end{tabular}\caption{Example~\ref{ex:adv}, test 2: errors for the \SL~scheme with initial condition~\eqref{eq:jump} at time $T=2$.}
		\label{tab:adv_sl_jump}
	\end{table}
\end{example}
\begin{example}\label{ex:adv_mix}{\bf Advection with constant velocity and mixed initial conditions.}\\
We consider the advection equation \eqref{eq:adv} with $c\equiv 0.1$ and $v_0(x)$  is an initial condition with compact support. In particular, we take an initial condition $v_0$  which contains three bumps
\begin{equation}\label{eq:mix}
v(0,x)=v_0(x)=\left\{\begin{array}{lll}
1-|x+3|&\quad\quad& -4 < x < -2\\
(1-x^2)^4&\quad\quad&  -1< x < 1\\
1&\quad\quad&   2 <x < 3\\
0&\quad\quad& \textrm{otherwise},
\end{array}\right.
\end{equation}	
In this test $\Omega=(-4.5,4.5)$, $T=6$ and $\nu=0.0833$. 
Fig.~\ref{fig:mixed} compares the different plots. Note that the SL~scheme has a rather big error around the jumps, even where the solution is flat and has a larger support with respect to the exact solution. The UB~has the typical piecewise constant behavior in the regularity region but keeps the support correctly. In Table~\ref{tab:adv_CS_mix} we show the errors for $L^1(\Omega)$, $L^2(\Omega)$, $L^\infty (\Omega)$ and $L^\infty (\Omega_{reg})$. It is clear that in this example initial data $v_0$~\eqref{eq:mix} have different regularity in different intervals so we expect to get small $L^\infty$ errors only in the domain where the solution is regular,  outside even a single node can make the $L^\infty$ error increase. That is why that $L^1 (\Omega)$ errors for the coupled scheme are close to those of the \UB~and  the local $L^\infty (\Omega_{reg})$ error are better with respect to the SL~scheme and UB~scheme. 
\begin{center}
	\begin{figure}[!hbtp]
		\centering
		\begin{tabular}{cc}
			\includegraphics[width=0.4\textwidth]{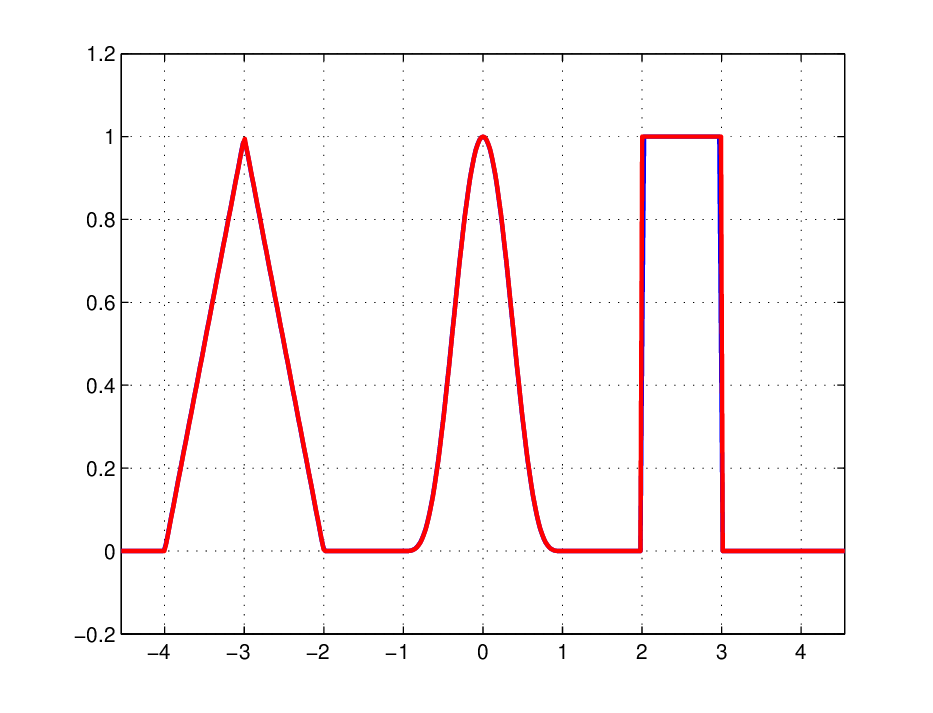}&	\includegraphics[width=0.4\textwidth]{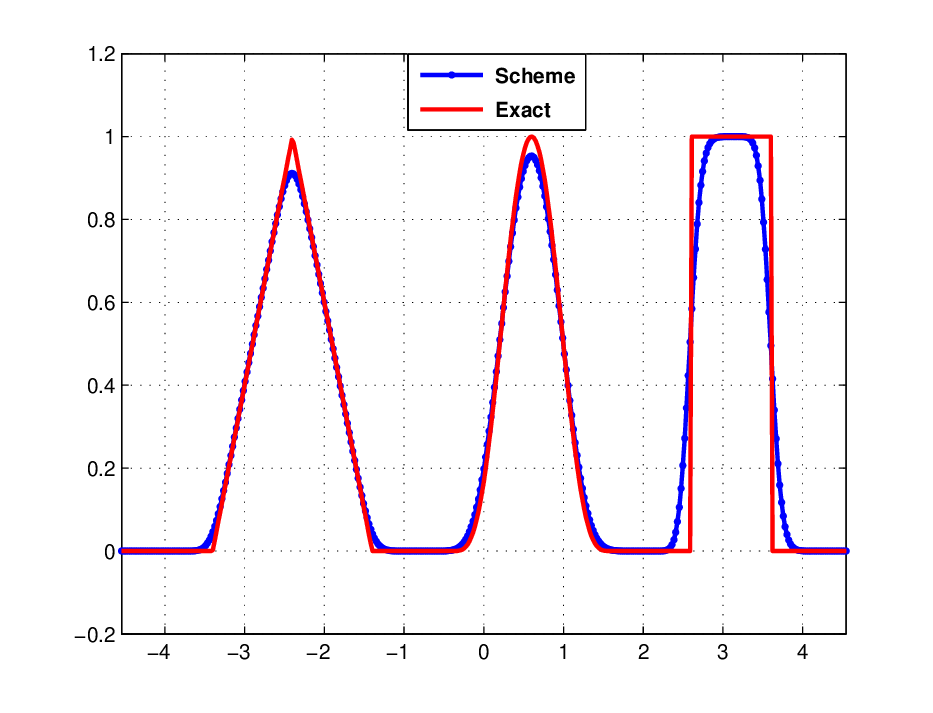}  \\				
			\includegraphics[width=0.4\textwidth]{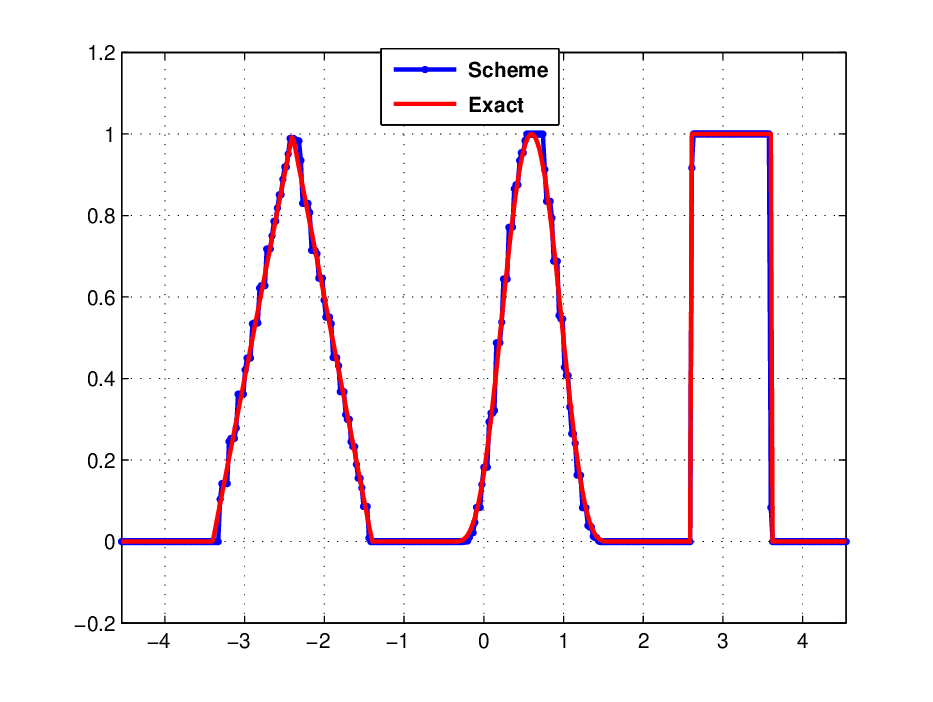} & \includegraphics[width=0.4\textwidth]{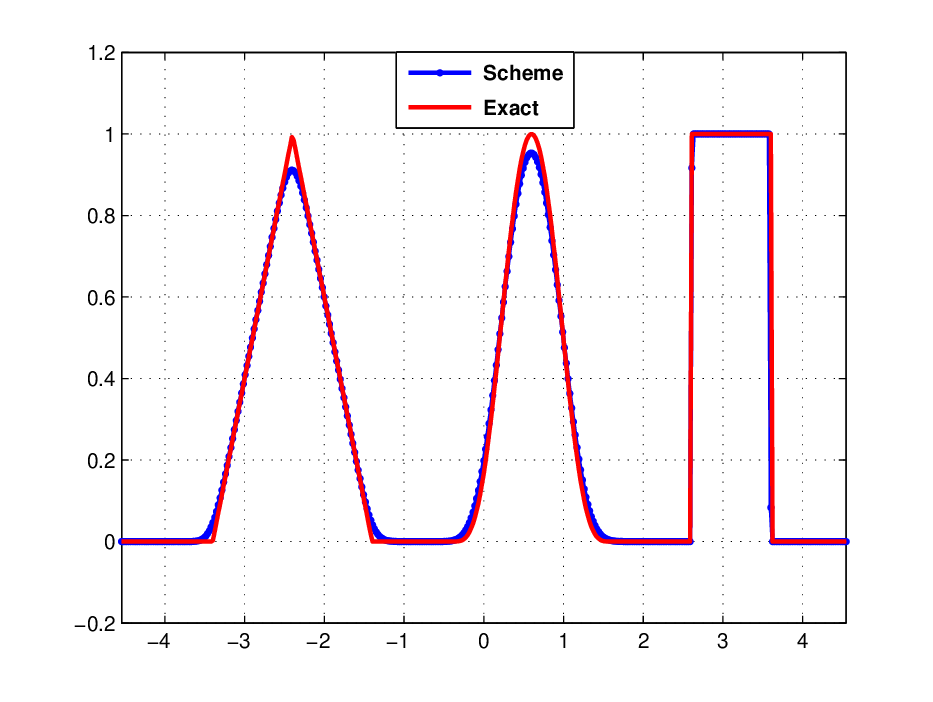}	
		\end{tabular}
		\caption{Example~\ref{ex:adv_mix}, plot of the solutions at  $T=1$. Top: initial data~\eqref{eq:mix} (left)  and  of  \SL~scheme (right). Bottom: \UB~(left), coupled scheme (right).} 
		\label{fig:mixed}
	\end{figure}
	\begin{table}[htp]
				\captionsetup{width=1\textwidth}
		\begin{tabular}{|c|c|c|c|c|}
				\hline
				$\Delta t$ & $\Delta x$ &$L^1$ Error & $L^2$ Error& $L^\infty$ Error \\
				\hline
				0.0750 &  0.0900 & 4.16E-001 & 3.69E-001 & 9.16E-001\\
				0.0375 &  0.0450 & 1.79E-001 & 1.27E-001 & 2.50E-001\\
				0.0187 &  0.0225 & 9.82E-002 & 1.46E-001 & 9.16E-001\\
				0.0094 &  0.0112 & 4.87E-002 & 8.87E-002 & 7.50E-001\\
				0.0047 &  0.0056 & 2.34E-002 & 7.01E-002 & 9.16E-001\\		
				\hline
		\end{tabular}\caption{Example~\ref{ex:adv_mix}, errors for \UB~scheme at time $T=6$.}
		\label{tab:adv_mix}
	\end{table}				
		\begin{table}[htp]
				\captionsetup{width=1\textwidth}
		\begin{tabular}{|c|c|c|c|c|}
				\hline
				$\Delta t$ & $\Delta x$ &$L^1$ Error & $L^2$ Error& $L^\infty$ Error \\
				\hline
				0.0750 &  0.0900 & 6.14E-001& 3.61E-001 & 5.09E-001\\
				0.0375 &  0.0450 & 3.99E-001& 2.89E-001 & 5.44E-001\\
				0.0187 &  0.0225 & 2.49E-001& 2.34E-001 & 5.04E-001\\
				0.0094 &  0.0112 & 1.62E-001& 1.94E-001 & 5.22E-001\\
				0.0047 &  0.0056 & 1.07E-001& 1.62E-001 & 5.02E-001 \\
				\hline
		\end{tabular}\caption{Example~\ref{ex:adv_mix}, errors for \SL~scheme at time $T=6$.}
		\label{tab:adv_ub_mix}
	\end{table}	
	\begin{table}[htp]
				\captionsetup{width=1\textwidth}
\begin{tabular}{|c|c|c|c|c|c|}
				\hline
				$\Delta t$ & $\Delta x$ &$L^1$ Error & $L^2$ Error& $L^\infty$ Error& $L^\infty (\Omega_{reg})$ \\
				\hline
				0.0750 &  0.0900 & 3.49E-001& 3.14E-001 & 9.16E-001 & 1.61E-001\\
				0.0375 &  0.0450 & 1.60E-001& 1.13E-001 & 2.50E-001 & 1.12E-001\\
				0.0187 &  0.0225 & 9.41E-002& 1.46E-001 & 9.16E-001 & 8.12E-002\\
				0.0094 &  0.0112 & 4.79E-002& 8.78E-002 & 7.50E-001 & 5.92E-002\\
				0.0047 &  0.0056 & 2.41E-002& 7.05E-002 & 9.16E-001 & 4.25E-002\\
				\hline
		\end{tabular}\caption{Example~\ref{ex:adv_mix},  errors for coupled \SL + \UB~scheme  at $T=6$.}
		\label{tab:adv_CS_mix}
	\end{table}		
\end{center}
\end{example}
\begin{example}\label{ex:adv_var}{\bf Advection equation with variable velocity.}\\
In this example, we consider the advection equation 
with the variable velocity $c(x)=-(x-\bar{x})$, where $\bar{x}=1.1$ (this example has been taken from ~\cite{FF14}). We consider
smooth initial data which has bounded second derivative i.e.
\begin{equation}\label{eq:smooth_var}
v(0,x)=v_0(x)=\max(0, 1-16(x-0.25)^2)^2
\end{equation}
Here the domain $\Omega=(0,1)$, $T=1$ and  we fix $\nu= 0.6$. As the solution is smooth, we expect our coupled scheme to coincide with SL~everywhere. Fig.~\ref{fig:smooth_var} shows the solution corresponding to~\eqref{eq:smooth_var} at time $t=20\Delta t$ with the time step $\Delta t=0.015385$ for the different schemes. 
As one can see the UB~scheme has its typical stepwise approximation. Fig.~\ref{fig:sigma_smooth_adv_var} shows the plot of $\sigma$ for different times $t=10\Delta t,~20\Delta t,~30\Delta t~$. The switching indicator is able to detect that the solution is smooth and chooses to apply the SL scheme in the whole domain.
Comparing Tables \ref{tab:adv_var_ub_smooth} and Table \ref{tab:adv_var_cs_smooth} it can be seen that in this example the $L^\infty$ error is much better for the SL (and coupled) scheme in particular for larger space/time steps and that the $L^1$ error is of the same order for UB and the coupled scheme.
\vspace*{0.5cm}
\begin{center}
	\begin{figure}[!hbt]
		\begin{tabular}{cc}
			\includegraphics[width=0.4\textwidth]{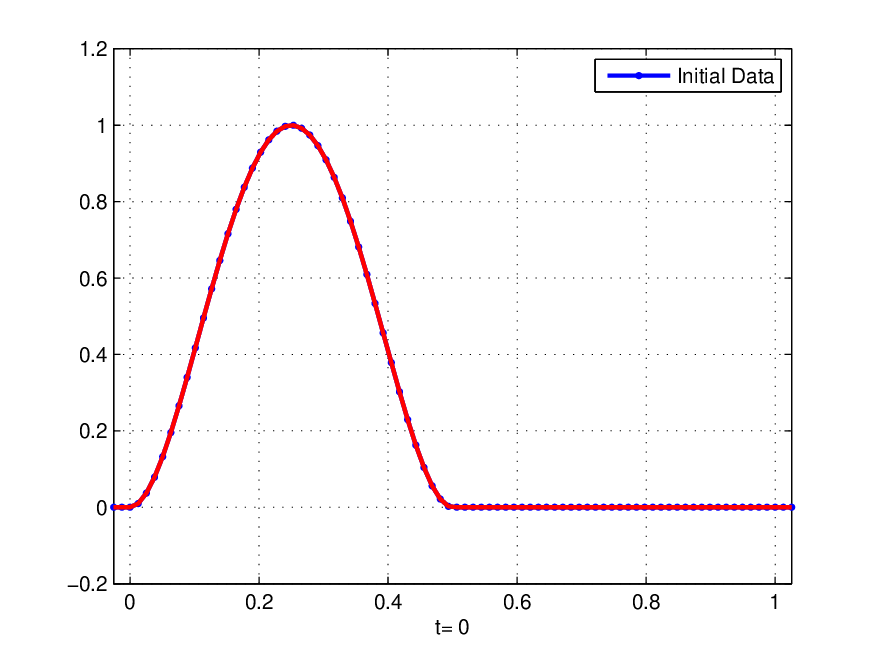}
			\includegraphics[width=0.4\textwidth]{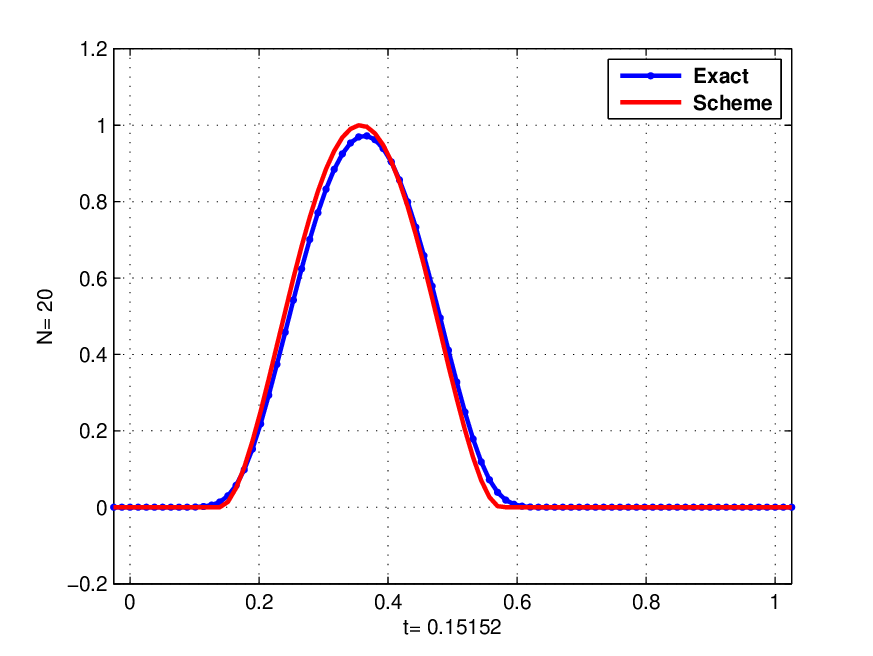}\\
			\includegraphics[width=0.4\textwidth]{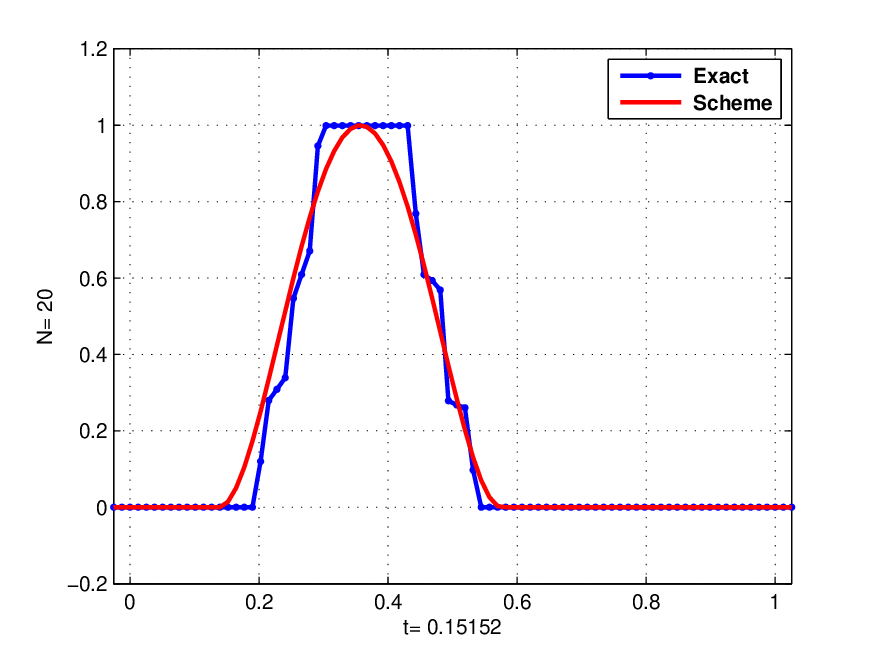}
			\includegraphics[width=0.4\textwidth]{figs/sl_smooth_var}\\
		\end{tabular}
		\caption{\small{Example \ref{ex:adv_var}: plot of the solutions at $t=20\Delta t$ where $\Delta t=0.015385$. Top: initial condition (left) and \SL (right). Bottom: \UB~(left) and coupled scheme (right)}} 
		\label{fig:smooth_var}
	\end{figure} 
\end{center}
\begin{center}
	\begin{figure}[!hbt]
		\includegraphics[width=0.3\textwidth]{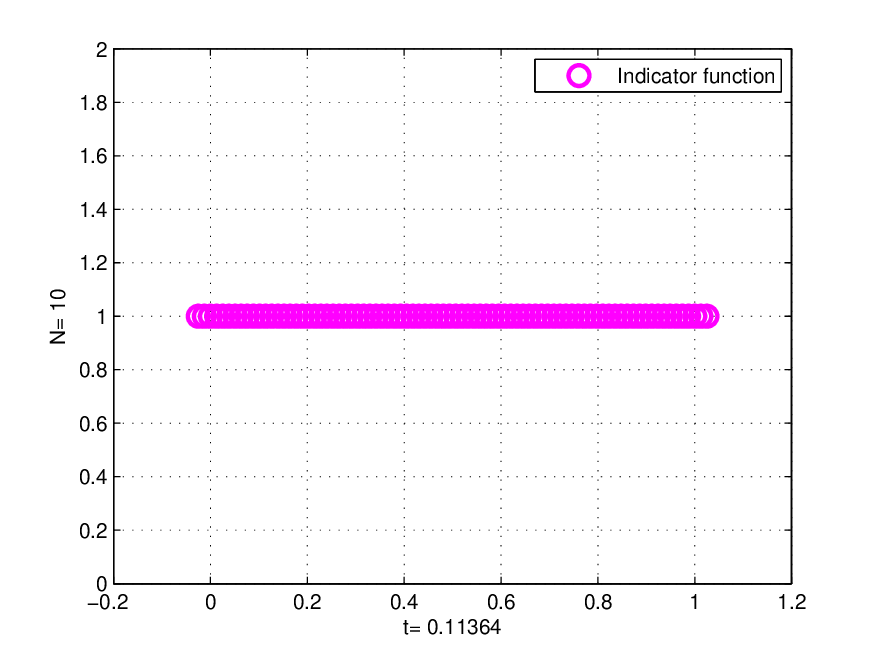}
		\includegraphics[width=0.3\textwidth]{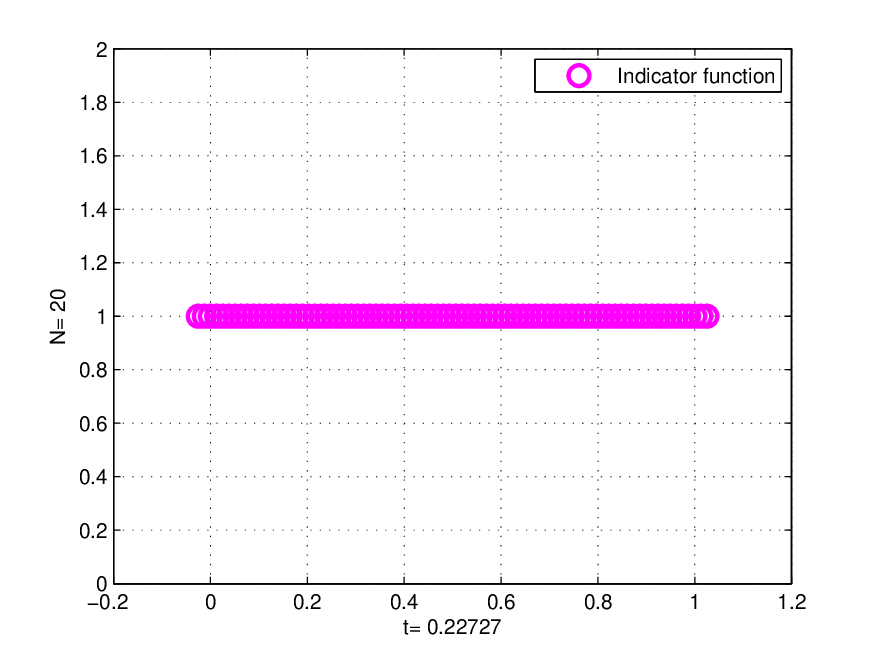}
		\includegraphics[width=0.3\textwidth]{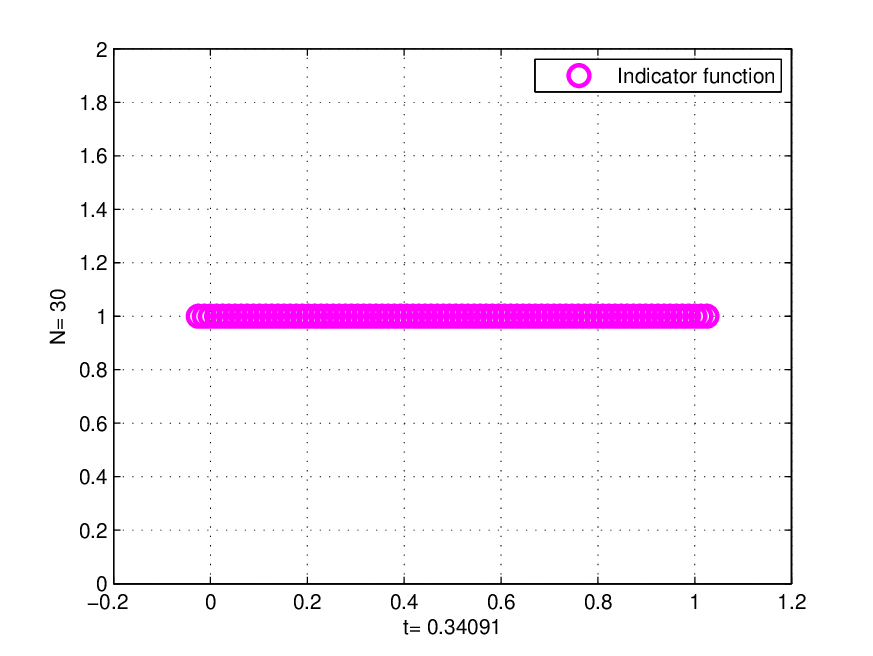}\\
		\caption{\small{Example \ref{ex:adv_var}, the plot of the indicator function $\sigma$ for \eqref{eq:smooth_var} initial data for $\Delta t=0.015385$ and $t=5\Delta t,~10\Delta t,~20\Delta t$.}} 
		\label{fig:sigma_smooth_adv_var}
	\end{figure}
\end{center}
	\begin{table}[!hbt]
				\captionsetup{width=1\textwidth}
		\begin{tabular}{|c|c|c|c|c|}
				\hline
				$\Delta t$ & $\Delta x$ &$L^1$ Error & $L^2$ Error& $L^\infty$ Error \\
				\hline
				0.031250 & 0.052632 & 5.07E-002 & 1.04E-001 & 3.04E-001 \\
				0.015385 & 0.025641 & 5.98E-002 & 1.15E-001 & 3.44E-001 \\
				0.007576 & 0.012658 & 3.08E-002 & 5.85E-002 & 2.12E-001 \\
				0.003774 & 0.006289 & 1.95E-002 & 3.72E-002 & 1.33E-001 \\
				0.001880 & 0.003135 & 1.58E-002 & 3.01E-002 & 1.18E-001 \\
				0.000939 & 0.001565 & 1.54E-002 & 2.74E-002 & 9.67E-002 \\
				\hline
		\end{tabular}\caption{Example \ref{ex:adv_var}, errors for the \UB~scheme with initial condition \eqref{eq:smooth_var} at time $T=1$.}
		\label{tab:adv_var_ub_smooth}
	\end{table}	
\begin{table}[!hbt]
			\captionsetup{width=1\textwidth}
	\begin{tabular}{|c|c|c|c|c|}
			\hline
			$\Delta t$ & $\Delta x$ &$L^1$ Error & $L^2$ Error& $L^\infty$ Error \\
			\hline
			0.031250 & 0.052632 & 2.98E-002 & 4.50E-002 & 9.51E-002\\
			0.015385 & 0.025641 & 1.95E-002 & 3.09E-002 & 6.63E-002\\
			0.007576 & 0.012658 & 1.53E-002 & 2.52E-002 & 5.86E-002\\
			0.003774 & 0.006289 & 1.52E-002 & 2.56E-002 & 6.16E-002\\
			0.001880 & 0.003135 & 1.50E-002 & 2.53E-002 & 6.16E-002\\
			0.000939 & 0.001565 & 1.51E-002 & 2.57E-002 & 6.29E-002\\
			\hline
	\end{tabular}\caption{Example \ref{ex:adv_var}, errors for the coupled \SL+\UB~scheme with initial condition \eqref{eq:smooth_var} at time $T=1$.}
	\label{tab:adv_var_cs_smooth}
\end{table}
\end{example}
\begin{example}\label{ex:hj}
Finally, let us consider the evolutive  Hamilton-Jacobi equation
\begin{equation}\label{eq;hje}
v_t+|c v_x|=0\quad (t,x)\in \Omega.
\end{equation}
We take $c=1$ and the smooth initial condition \eqref{eq:smooth}. Here the domain $\Omega=[-2,2]$, $T=0.5$ and we fix $\nu=0.6$. In the tables, all the errors are global in $\Omega$.
In this case, the initial solution is smooth but at some point, the solution loses its regularity and a kink appears at $x=0$. So at the beginning we expect the coupled scheme to apply the SL scheme and when at time $t_n$ the singularity is detected the switching parameter $\sigma^n_j$ becomes 0 in a cell and the coupled scheme must switch to the UB~scheme. Fig.~\ref{fig:smooth_sl_hj}--\ref{fig:smooth_cs_hj}, show the plots at different time steps of the evolution of the same initial condition \eqref{eq:smooth}. Fig.~\ref{fig:sigma_smooth_hj}, shows the evolution of the switching indicator which has the desired behavior, i.e. until $t=10 \Delta t$ solution is smooth (and $\sigma\equiv 1$ everywhere).  After that time when the regularity is lost  a singularity is detected and $\sigma=0$ at $x=0$, so the scheme switches correctly to the \UB~scheme. Table~\ref{tab:hj_sl_smooth}--\ref{tab:hj_ub_smooth}, show the error tables of SL, UB and coupled scheme respectively at time $t=20\Delta t$ for $\Delta t=0.014706$.
It should be noted that in this example the coupled scheme is always more accurate with respect to the UB scheme and in general its accuracy is very close to the SL scheme, for small space/time steps it is almost identical to the SL scheme. This is probably due to the fact that the singularity stays at $x=0$ and that the solution is still Lipschitz continuous.
\begin{center}
	\begin{figure}[!hbt]
		\begin{tabular}{cc}
			\includegraphics[width=0.4\textwidth]{figs/initial_smooth}&
			\includegraphics[width=0.4\textwidth]{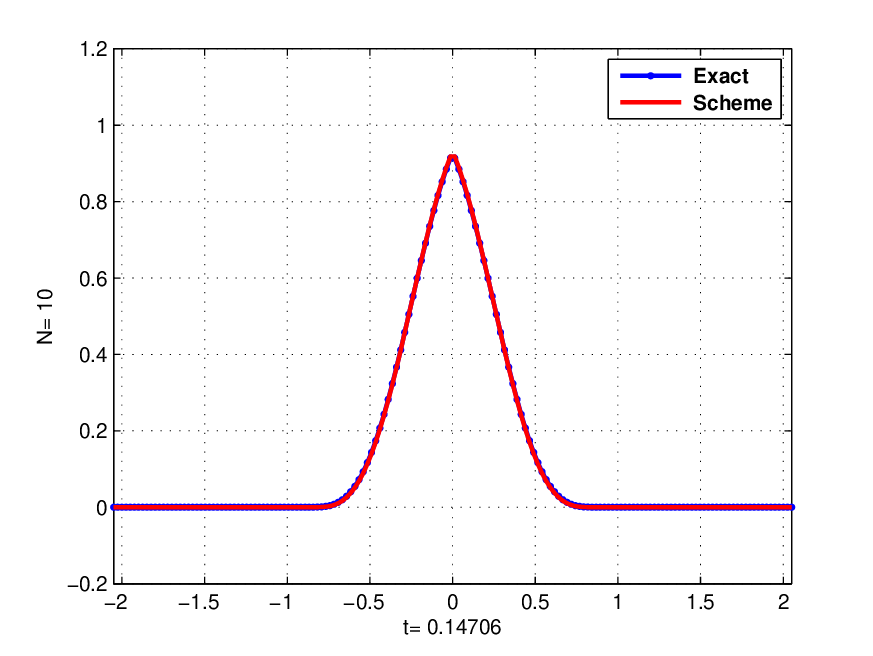}\\
			\includegraphics[width=0.4\textwidth]{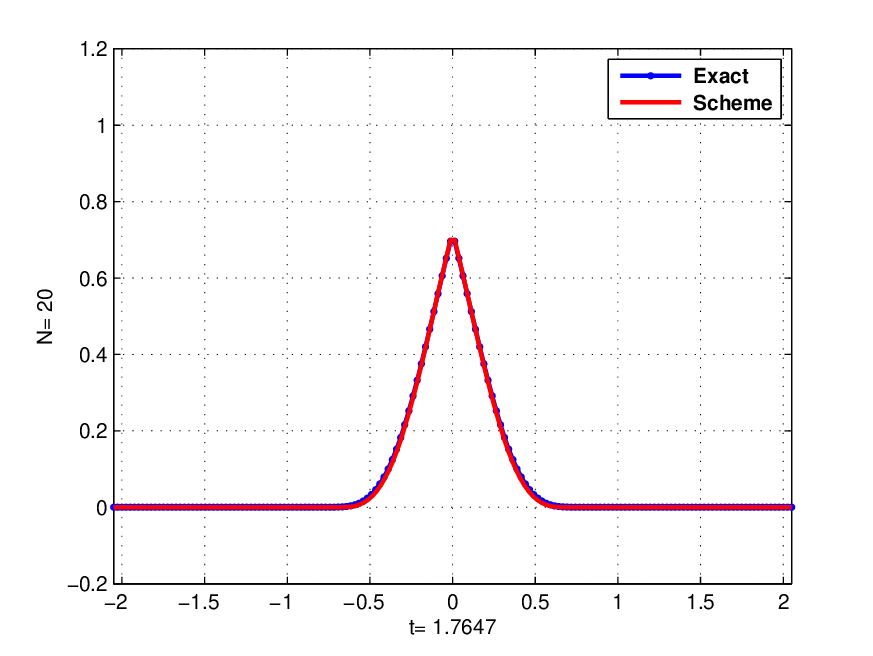}&
			\includegraphics[width=0.4\textwidth]{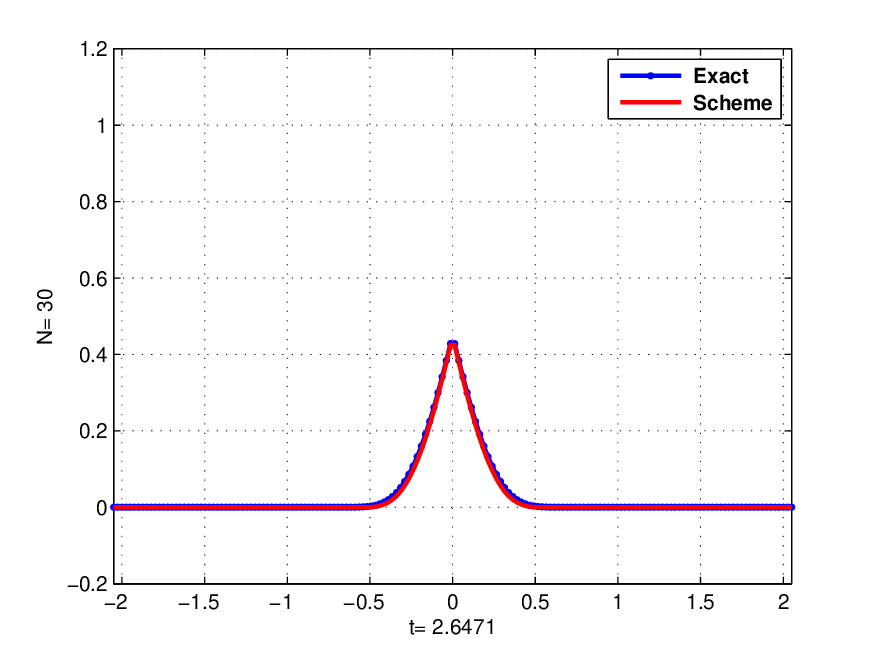}\\
		\end{tabular}
		\caption{Example~\ref{ex:hj}, evolution of the initial condition \eqref{eq:smooth} for the \SL~scheme at  $t=10\Delta t,~20\Delta t,~30\Delta t$, where $\Delta t=0.014706$.} 
		\label{fig:smooth_sl_hj}
	\end{figure} 
	\begin{figure}[!hbt]
		\begin{tabular}{cc}
			\includegraphics[width=0.4\textwidth]{figs/initial_smooth}&
			\includegraphics[width=0.4\textwidth]{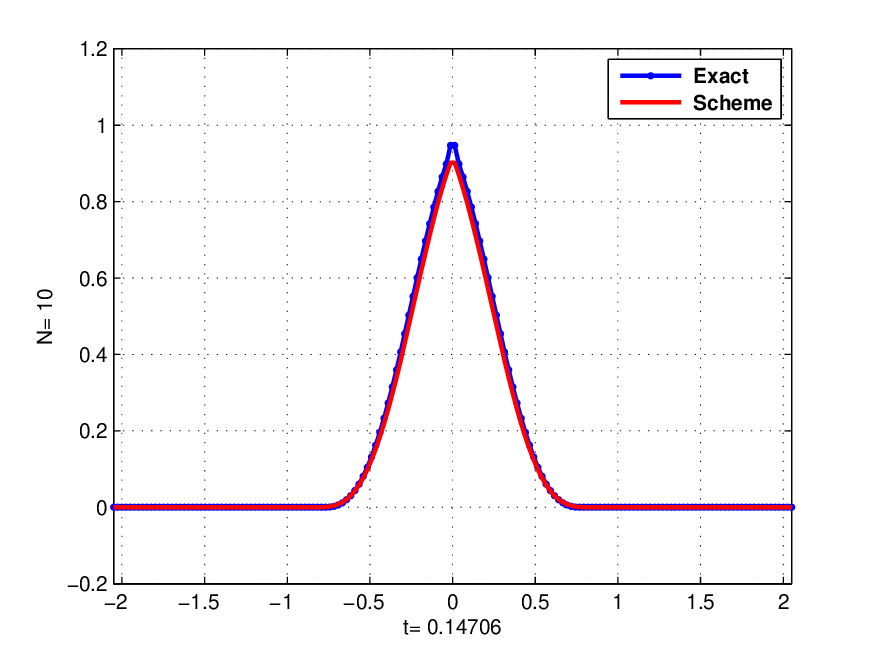}\\
			\includegraphics[width=0.4\textwidth]{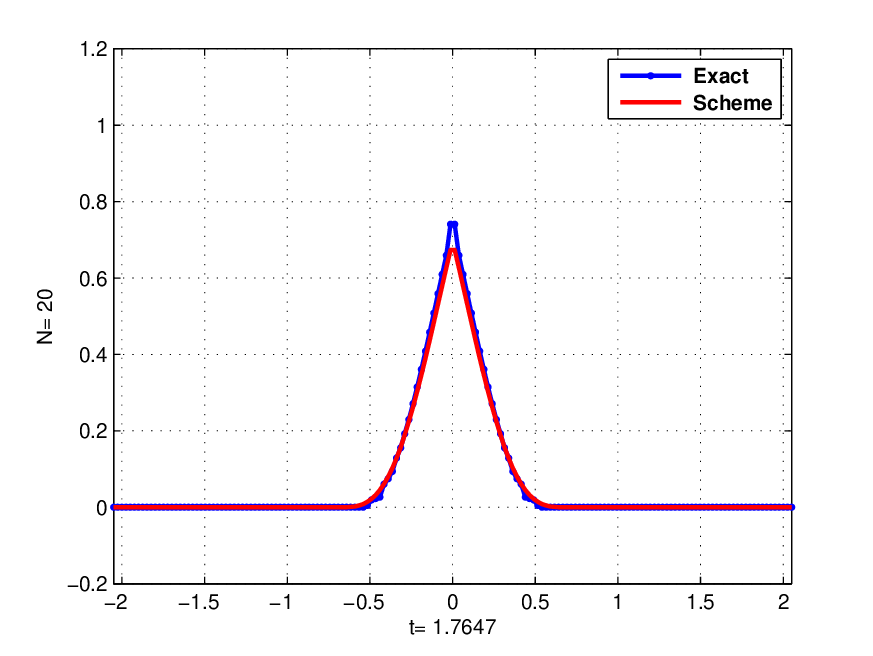}&
			\includegraphics[width=0.4\textwidth]{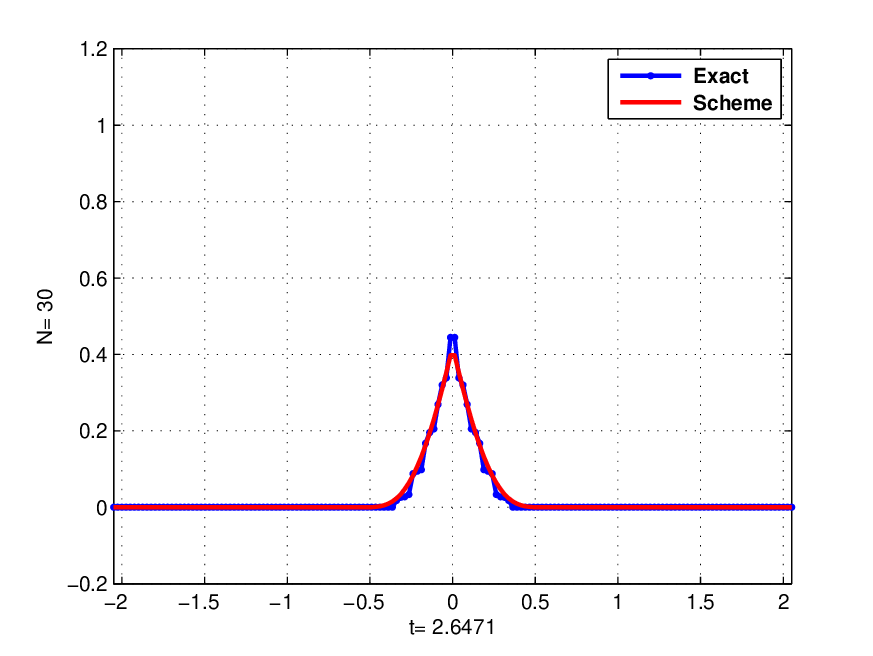}\\
		\end{tabular}
		\caption{Example~\ref{ex:hj}: evolution of the initial condition \eqref{eq:smooth} for the \UB~scheme at  $t=10\Delta t,~20\Delta t,~30\Delta t$,  where $\Delta t=0.014706$.} 
		\label{fig:smooth_ub_hj}
	\end{figure} 
	\begin{figure}[!hbt]
		\begin{tabular}{cc}
			\includegraphics[width=0.4\textwidth]{figs/initial_smooth}&
			\includegraphics[width=0.4\textwidth]{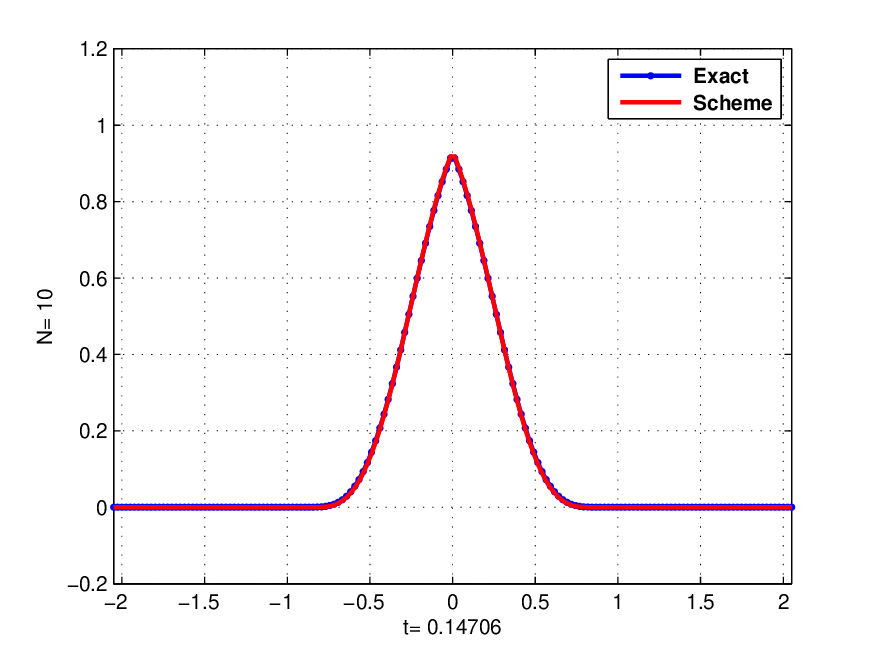}\\
			\includegraphics[width=0.4\textwidth]{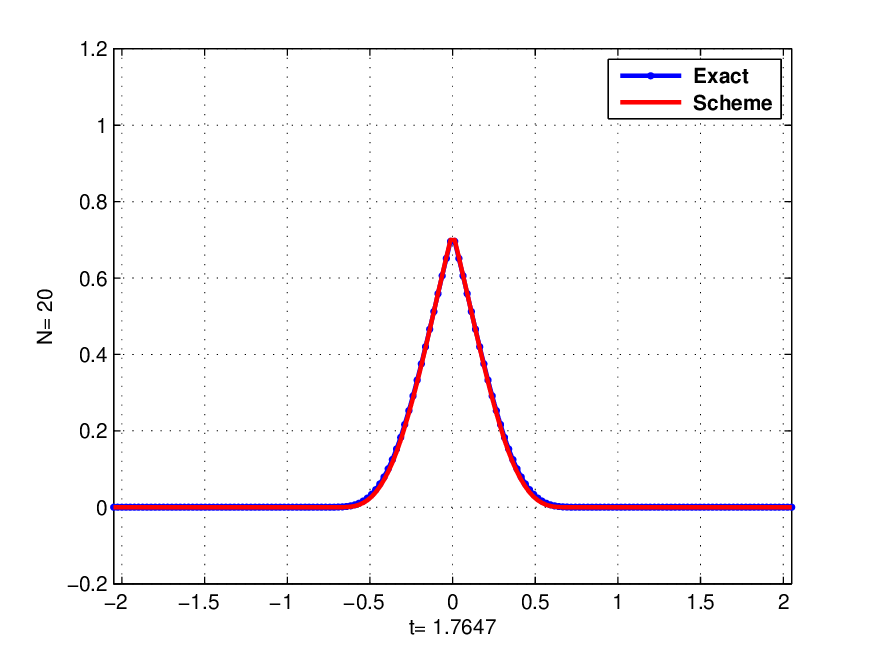}&
			\includegraphics[width=0.4\textwidth]{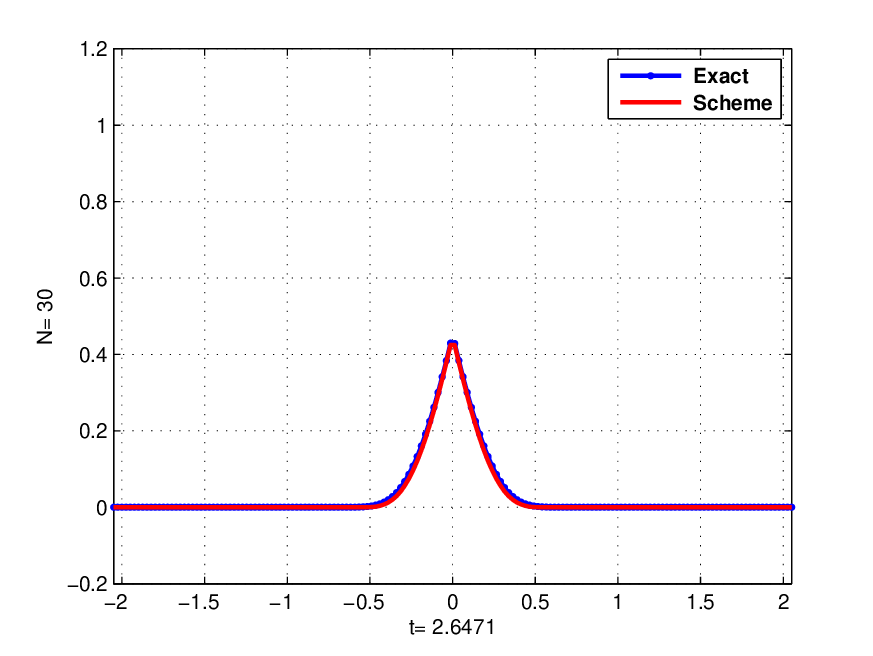}\\
		\end{tabular}
		\caption{Example~\ref{ex:hj}, evolution of the initial condition \eqref{eq:smooth} for the coupled \SL+\UB~scheme at  $t=10\Delta t,20\Delta t,30\Delta t$ where $\Delta t=0.014706$.} 
		\label{fig:smooth_cs_hj}
	\end{figure} 
\end{center}
\begin{center}
	\begin{figure}[!hbt]
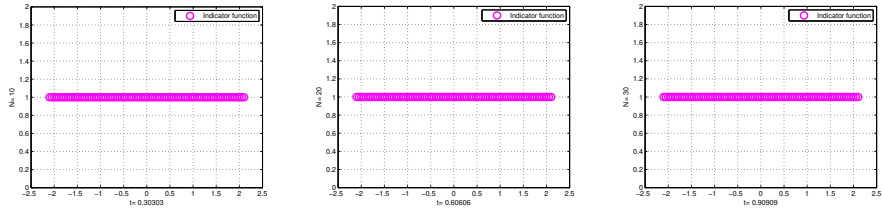

		\includegraphics[width=0.3\textwidth]{figs/sigma_smooth_10}
		\includegraphics[width=0.3\textwidth]{figs/sigma_smooth_20}
		\includegraphics[width=0.3\textwidth]{figs/sigma_smooth_30}\\
		\caption{Example~\ref{ex:hj}, plot of the indicator function $\sigma$ for \eqref{eq:smooth} initial data at $t=10\Delta t,~20\Delta t,~30\Delta t$, $\Delta t=0.014706$.}  
		\label{fig:sigma_smooth_hj}
	\end{figure}
\end{center}
\begin{center}
	\begin{table}[!hbt]
				\captionsetup{width=1\textwidth}
\begin{tabular}{|c|c|c|c|c|}
				\hline
				$\Delta t$ & $\Delta x$ &$L^1$ Error & $L^2$ Error& $L^\infty$ Error \\
				\hline
				0.125000&0.210526&2.34E-001&2.42E-001&3.28E-001\\ 
				0.055556&0.102564&8.26E-002&1.27E-001&2.69E-001\\ 
				0.029412&0.050633&3.86E-002&5.20E-002&1.30E-001\\ 
				0.014706&0.025157&1.76E-002&2.27E-002&6.69E-002 \\
				0.007463&0.012539&8.59E-003&1.04E-002&3.31E-002\\
				0.003731&0.006260&6.80E-003&8.58E-003&2.41E-002\\
				\hline
		\end{tabular}\caption{Example~\ref{ex:hj}: errors for the \UB~ scheme with initial condition \eqref{eq:smooth} at time $T=0.5$.}
		\label{tab:hj_ub_smooth}
	\end{table}
	\begin{table}[!hbt]
				\captionsetup{width=1\textwidth}
\begin{tabular}{|c|c|c|c|c|}
				\hline
				$\Delta t$ & $\Delta x$ &$L^1$ Error & $L^2$ Error& $L^\infty$ Error \\
				\hline
				0.125000&0.210526&3.11E-002&2.62E-002&2.70E-002\\ 
				0.055556&0.102564&2.27E-002&2.08E-002&2.52E-002 \\ 
				0.029412&0.050633&1.10E-002&1.01E-002&1.26E-002    \\ 
				0.014706&0.025157&5.96E-003&5.75E-003&7.45E-003 \\
				0.007463&0.012539&2.93E-003&2.85E-003&3.72E-003\\
				0.003731&0.006260&1.47E-003&1.44E-003&1.89E-003 \\
				\hline
		\end{tabular}\caption{Example~\ref{ex:hj}: errors for the \SL  scheme with initial condition~\eqref{eq:smooth} at time $T=0.5$.}
		\label{tab:hj_sl_smooth}
	\end{table}
	\begin{table}[!hbt]
				\captionsetup{width=1\textwidth}
		\begin{tabular}{|c|c|c|c|c|}
				\hline
				$\Delta t$ & $\Delta x$ &$L^1$ Error & $L^2$ Error& $L^\infty$ Error \\
				\hline
				0.125000&0.210526&5.01E-002&4.16E-002&4.27E-002\\ 
				0.055556&0.102564&2.59E-002&2.35E-002&2.90E-002\\ 
				0.029412&0.050633&1.16E-002&1.08E-002&1.36E-002  \\ 
				0.007463&0.012539&5.99E-003&5.71E-003&7.34E-003\\
				0.014706&0.025157&2.98E-003&2.90E-003&3.79E-003 \\
				0.003731&0.006260&1.49E-003&1.46E-003&1.90E-003\\
				\hline
		\end{tabular}\caption{Example~\ref{ex:hj}: errors for the coupled \SL + \UB~ scheme with initial condition~\eqref{eq:smooth} at time $T=0.5$.}
		\label{tab:hj_cs_smooth}
	\end{table}
\end{center}	
\end{example}
\section{Conclusion and future work}
In this paper, we recall the semi-Lagrangian \cite{FF14}, ultra-bee \cite{BZ07} schemes and coupled algorithm from \cite{S2018} for solving advection and Hamilton-Jacobi equations. The scheme uses a switching indicator to decide which method to apply in each cell, depending on the smoothness and stability of the solution. We focus on the combination of an \SL~and \UB~method, and we show that the scheme can capture jumps and singularities accurately. The same idea of the scheme can be applied to other methods and can be simplified when the methods share the same grid and the same type of values, because then we do not need to project the values between different grids. We analyse the properties of the coupled scheme for the advection problem and we hope that they can be extended to non-linear Hamilton-Jacobi equations, as suggested by the last example. We plan to study this extension of the properties for HJ equation and the generalisation to 2D problems in future work.
\section*{Acknowledgements.}We dedicate this paper to my late PhD supervisor, Professor Maurizio Falcone, we started this work together few years ago. He was a great mentor. I am honoured to have worked with him and learned from him. His legacy will live on through his publications and many students.
\bibliographystyle{plainnat}

%
%

\section*{Declarations}
\subsection*{Ethical Approval }
Not applicable. 
\subsection*{Competing interests }
The authors declare that they have no known competing financial interests or personal relationships that could have appeared to influence the work reported in this paper.
\subsection*{Author's contributions }
I am the only author, so I am responsible for this paper.
\subsection*{Funding }
Not applicable.
\subsection*{Availability of data and materials}
All data supporting the findings of this study are presented within this paper. There are no additional datasets associated with this research. Readers can find all relevant data, figures, and tables in the main body of the paper.

\end{document}